\newcommand{\showcomments}{yes}
\renewcommand{\showcomments}{no}
\newsavebox{\commentbox}
\newenvironment{com}%
{\ifthenelse{\equal{\showcomments}{yes}}%
{\footnotemark
        \begin{lrbox}{\commentbox}
        \begin{minipage}[t]{1.25in}\raggedright\sffamily\tiny
        \footnotemark[\arabic{footnote}]}
{\begin{lrbox}{\commentbox}}}%
{\ifthenelse{\equal{\showcomments}{yes}}%
{\end{minipage}\end{lrbox}\marginpar{\usebox{\commentbox}}}
{\end{lrbox}}}
\newtheorem{thm}{Theorem}[section]
\newtheorem{lem}[thm]{Lemma}
\newtheorem{cor}[thm]{Corollary}
\newtheorem{conj}[thm]{Conjecture}
\newtheorem{prop}[thm]{Proposition}
\theoremstyle{definition}
\newtheorem{defn}[thm]{Definition}
\newtheorem{prob}[thm]{Problem}
\DeclareMathOperator{\rank}{rank}
\DeclareMathOperator{\degree}{degree}
\DeclareMathOperator{\link}{link}
\newcommand{\field}[1]{\mathbb{#1}}
\newcommand{\integers}{\ensuremath{\field{Z}}}
\newcommand{\reals}{\ensuremath{\field{R}}}
\newcommand{\alink}{\link_{\uparrow}(x)}
\newcommand{\dlink}{\link_{\downarrow}(x)}
\newcommand{\euler}{\chi}
\newcommand{\pr}[1]{\mathbb P( #1)}
\newcommand{\p}{\textup{\textsf{p}}}
\newcommand{\mcl}{\mathcal}
\DeclareMathOperator{\deficiency}{{def}}
\begin{document}

\title{Incoherent Coxeter Groups}

\author{Kasia Jankiewicz}
\author[D.~T.~Wise]{Daniel T. Wise}
	\address{Dept. of Math. \& Stats.\\
                    McGill University \\
                    Montreal, Quebec, Canada H3A 0B9}
           \email{kasia@math.mcgill.ca}
           \email{wise@math.mcgill.ca}

\subjclass[2010]{20F55}
\keywords{Coxeter groups, Morse theory, Coherent groups}
\date{\today}
\thanks{Research supported by NSERC}

\begin{com}
{\bf \normalsize COMMENTS\\}
ARE\\
SHOWING!\\
\end{com}

\begin{abstract}
We use probabilistic methods to prove that many Coxeter groups are incoherent. In particular, this holds for Coxeter groups of uniform exponent $>2$ with sufficiently many generators. \end{abstract}

\maketitle

\begin{com}
TO DO:
 Intro?
\end{com}
\section{Introduction}
A \emph{Coxeter group} $G$ is given by the presentation
\[
\langle a_1,\dots, a_r\mid a_i^2, (a_ia_j)^{m_{ij}} : 1\leq i < j\leq r \rangle
\]
where $m_{ij}\in\{2,3,\dots,\infty\}$ and where $m_{ij}=\infty$ means no relator of the form $(a_ia_j)^{m_{ij}}$. Throughout this paper all presentations of Coxeter groups are of the above form. It is traditional to encode the above data for $G$ in terms of an associated labelled graph $\Upsilon_G$, whose vertices correspond to the generators and where an edge labelled by $m_{ij}$ joins vertices $a_i, a_j$ when $m_{ij}<\infty$. We omit an edge for $m_{ij}=\infty$.
\begin{defn}
A group $G$ is \emph{coherent} if every finitely generated subgroup of $G$ is finitely presented. Otherwise, $G$ is \emph{incoherent}.
\end{defn}


Our main result which is stated and proven as Theorem~\ref{thm:nonuniform case} is the following:
\begin{thm}
For each $M$ there exists $R=R(M)$ such that if $K$ is a Coxeter group with $3\leq m_{ij}\leq M$ and rank $r\geq R$ then $K$ is incoherent.
\end{thm}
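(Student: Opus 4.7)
The plan is to produce a finitely generated subgroup of $K$ that is not finitely presented by means of a Bestvina--Brady-style Morse argument on a nonpositively curved 2-complex associated with $K$. First I would pass to the rotation subgroup $K^+ = \ker(K\to \integers/2)$, or to a deeper finite-index torsion-free subgroup, in order to have a well-behaved action on a $K(\pi,1)$ 2-complex $X$ built from (a quotient of a subcomplex of) the Davis complex $\Sigma_K$. Since the abelianization of $K^+$ has rank growing with $r$, there are many candidate characters $\phi\colon K^+\to\integers$; each extends to an equivariant Morse function $\widetilde{\phi}\colon\widetilde X \to \reals$, and I would analyze $N := \ker\phi$ via the ascending and descending links of vertices of $X$.

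The standard Morse dictionary then asserts that $N$ is finitely generated iff every such link is connected, and is finitely presented only if every such link is simply connected. The aim is to engineer a pair $(\phi,X)$ so that every vertex link is connected while some link has nontrivial $\pi_1$; this gives a finitely-generated-but-not-finitely-presented subgroup of $K^+$, proving incoherence of $K^+$ and hence of $K$. The probabilistic input enters in two places. First, a union-bound over the finitely many link vertices establishes that, for most $\phi$ drawn from a large family of characters, every ascending and descending link is connected --- this is where $r\geq R(M)$ is used, since the count of relations to control grows like $\binom{r}{2}$. Second, a counting argument over the $\Theta(r^3)$ triangle subgroups $\langle a_i, a_j, a_k\rangle$, with labels in $[3,M]$, produces many short cycles of bounded length in link graphs, and forces at least one such cycle to represent a nontrivial element of $\pi_1$ of some vertex link.

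The main obstacle I anticipate is the explicit combinatorial analysis of vertex links in the chosen complex $X$. Each Coxeter relator $(a_ia_j)^{m_{ij}}$ contributes a cycle of length $2m_{ij}$ to the link graphs, and the dihedral $2$-cells from pairs together with the triangle $2$-cells (or higher-order gluings) from triples interact in a way that one must track carefully to ensure simultaneously that connectedness holds for most $\phi$ but that the essential loop is not killed for some $\phi$. The probabilistic framework is presumably what decouples these two requirements: constructing $\phi$ deterministically that meets both conditions across the entire class of Coxeter groups of rank $\geq R$ and label bound $M$ looks impractical, whereas averaging over a large random family of $\phi$'s makes the two estimates orthogonal enough to succeed once $r$ is sufficiently large relative to $M$.
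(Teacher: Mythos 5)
Your overall framework---pass to a finite-index torsion-free subgroup, build a character to $\integers$, and control the kernel via ascending and descending links in the sense of Bestvina--Brady---matches the paper's strategy for the uniform-exponent case. But there is a genuine gap at the step where you conclude that the kernel is \emph{not} finitely presented. You assert that $N=\ker\phi$ ``is finitely presented only if every such link is simply connected.'' Morse theory does not give this implication: non-simply-connected ascending or descending links do not by themselves obstruct finite presentability (simple connectivity of the links is a \emph{sufficient} condition for finite presentability of the kernel, and the celebrated converse for Bestvina--Brady groups over right-angled Artin groups is a separate, delicate argument special to that setting). Your plan to ``force at least one short cycle to represent a nontrivial element of $\pi_1$ of some vertex link'' therefore does not yield incoherence. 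The paper sidesteps this entirely: it uses Morse theory only to get $N$ finitely \emph{generated}, and then invokes Bieri's theorem (a nontrivial finitely presented normal subgroup of a group of cohomological dimension $\leq 2$ is free or of finite index) together with the computation $\euler(G')>0$ for large $r$; this rules out $N$ being free, since that would force $\euler(G')=\euler(N)\cdot\euler(\integers)=0$. This is the missing idea you need.

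Two further points. First, the paper does not attack the general bounded-exponent group $K$ directly: it proves the result for the uniform-exponent groups $G_{(r,m)}$ and then reduces the general case to that one by the multi-color Ramsey theorem applied to the edge-coloring of the complete Coxeter diagram by the exponents $m_{ij}\in\{3,\dots,M\}$; incoherence of a Coxeter subgroup passes to $K$. Your proposal would have to handle non-uniform exponents throughout, which complicates the construction considerably. Second, your union bound needs quantitative control that you have not supplied: the random choices seen at distinct vertices and edges of a given link must be independent, which requires the walls of the complex to be embedded, two-sided and without self-osculation, and the number of $0$-cells must grow only polynomially in $r$ while the failure probability at each vertex decays exponentially. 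Producing a finite cover with both properties simultaneously---the paper's proposition on a polynomial-degree cover, built from $O(\log r)$ partitions separating quadruples of generators together with retractions onto $G_{(4,m)}$ and separability of wall stabilizers---is the technical heart of the argument and is absent from your outline.
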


Our result joins a similar result for groups acting properly and cocompactly on Bourdon buildings \cite{WiseRandomMorse} and we expect that there is more to come in this direction.

\section{Preliminaries on Coxeter groups, Walls, and Morse Theory}

\subsection{Euler characteristic and compression}\label{sec:euler}
Let $G$ be a Coxeter group given by
\[
\langle a_1,\dots, a_r\mid a_i^2, (a_ia_j)^{m_{ij}} : 1\leq i < j\leq r \rangle
\]
and let $X$ be the standard $2$-complex associated to this presentation.
Consider an index $d$ torsion-free subgroup $G'$ of $G$. Let $\widehat X\to X$ be a cover of $X$ corresponding to $G'$. All edges embed in $\widehat X$, since all generators are torsion elements, and all $2$-cells embed since each proper subword of $(a_ia_j)^{m_{ij}}$ is a torsion element. Consider the complex $\overline{X}$ obtained from $\widehat X$ by firstly collapsing $2$-cells corresponding to $a_i^2$ relators to $1$-cells and secondly collapsing $2m_{ij}$ copies of $2m_{ij}$-gons with the same boundary when $m_{ij} \neq \infty$.
The complex $\overline X$ is the \emph{compression} of $\widehat X$. See Figure~\ref{fig:compression} for the compression arising from $\langle a,b\mid a^2, b^2, (ab)^3\rangle$.

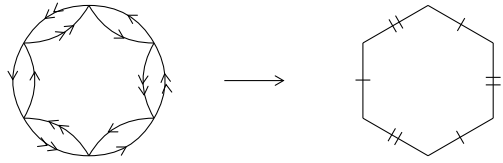
\begin{figure}
\begin{tikzpicture}[scale=1]
\tikzset{%
    > = angle 60,
    arrow/.style = {%
      decoration = {%
        markings,
        mark = at position .5 with {\arrow {>};}
      },
      postaction = decorate
    }
   }

\tikzset{%
    > = angle 60,
    arrowback/.style = {%
      decoration = {%
        markings,
        mark = at position .5 with {\arrow {<};}
      },
      postaction = decorate
    }
  }
  \tikzset{%
    > = angle 60,
    darrow/.style = {%
      decoration = {%
        markings,
        mark = at position .6 with {\arrow {>>};}
      },
      postaction = decorate
    }
  }
 \tikzset{%
    > = angle 60,
    darrowback/.style = {%
      decoration = {%
        markings,
        mark = at position .57 with {\arrow {<};}
      },
       decoration = {%
        markings,
        mark = at position .43 with {\arrow {<};}
      },
      postaction = decorate
    }
  }

\draw[arrowback] (30:1) to[in=-60, out=180] (90:1);
\draw[darrowback](90:1) to[out=240,in=0] (150:1);
\draw[arrowback](150:1) to[out=-60, in=60] (210:1);
\draw[darrowback](210:1) to[out=0, in=120] (270:1);
\draw[arrowback](270:1) to[out=60, in=180] (330:1);
\draw[darrowback](330:1) to[out=120, in=-120] (30:1);

\draw[arrow] (30:1) to[in=0, out=120] (90:1);
\draw[darrow](90:1) to[out=180,in=60] (150:1);
\draw[arrow](150:1) to[out=-120, in=120] (210:1);
\draw[darrow](210:1) to[out=-60, in=180] (270:1);
\draw[arrow](270:1) to[out=0, in=-120] (330:1);
\draw[darrow](330:1) to[out=60, in=-60] (30:1);

\draw[->] (1.8,0) to (2.6,0);
\end{tikzpicture}\,\,\,\,\,\,\,\,\,\,\,\,\,\,
\begin{tikzpicture}[scale=1]

  \tikzset{%
    > = stealth,
    notarrow/.style = {%
      decoration = {%
        markings,
        mark = at position .5 with {\arrow{|};}
      },
      postaction = decorate
    }
  } 
    \tikzset{%
    > = stealth,
    dnotarrow/.style = {%
      decoration = {%
        markings,
        mark = at position .45 with {\arrow {|};}
        },
       decoration = {%
       	markings,
	mark = at position .55 with {\arrow{|};}
      },
      postaction = decorate
    }
  } 
\draw[notarrow] (30:1) to (90:1);
\draw[dnotarrow] (90:1) to (150:1);
\draw[notarrow] (150:1) to (210:1);
\draw[dnotarrow] (210:1) to (270:1);
\draw[notarrow] (270:1) to (330:1);
\draw[dnotarrow] (330:1) to (30:1);
\end{tikzpicture}
\caption{The compression $\widehat X\to\overline X$. Each bigon collapses to an edge and six $2$-cells collapse to a single $2$-cell.}\label{fig:compression}
\end{figure}

We say $G$ has \emph{dimension~$\leq 2$} when $\overline X$ is aspherical. This holds precisely when $\frac{1}{m_{ij}}+\frac{1}{m_{jk}} +\frac{1}{m_{ki}} \leq 1$ for each $i,j,k$. Indeed, there is then  a natural metric of nonpositive curvature on $\overline X$ induced by metrizing each $2$-cell as a regular Euclidean polygon. However if some 3-generator subgroup is finite, then $\overline X$ contains a copy of $S^2$. We focus on Coxeter groups of dimension~$\leq 2$, in which case the following discussion of $\euler(G)$ is sensible.

The complex $X$ has one $0$-cell, $r$ $1$-cells and one $2$-cell for each pair of generators $\{i,j\}$ with $m_{ij}< \infty$. As $\degree(\widehat X\rightarrow X)=d$, the complex $\widehat X$ has $d$ $0$-cells, $dr$ $1$-cells and $d$ $2$-cells for each pair $\{i,j\}$ with $m_{ij}<\infty$. The complex $\overline X$ has $d$ $0$-cells, $\frac{dr}{2}$ $1$-cells and $\frac{d}{2m_{ij}}$ $2$-cells for each pair $\{i, j\}$ with $m_{ij}<\infty$. The \emph{Euler characteristic of $G$} is:
\begin{equation}\label{eulerformula}
\euler(G) = \frac{\euler(\overline X)}{[G:G']} = \frac{1}{d}\bigg(d - \frac{dr}{2} +\sum_{\{i,j\}}\frac{d}{2m_{ij}}\bigg) = 1 - \frac{r}{2} +\sum_{\{i,j\}}\frac{1}{2m_{ij}}.
\end{equation}
This is independent of the choice of finite index torison-free subgroup.
We thus have:
\[\euler(G_{(r,m)}) = 1 - \frac{r}{2} + \frac{(r-1)r}{4m}.\]
Thus if $m$ is fixed then $\euler(G_{(r,m)})>0$ for all sufficiently large $r$.

\subsection{Walls}
Let $K$ be a combinatorial $2$-complex
with the property that each $2$-cell has an even number of sides (we have in mind $K = \overline X$ as defined in previous section).
Two $1$-cells in the attaching map $\partial_\p C\to K^{1}$
of a $2$-cell $C$ are \emph{parallel}
if they are images of opposite edges in $\partial_\p C$.
An \emph{abstract wall} is an equivalence class
of $1$-cells in the equivalence relation generated by parallelism. A \emph{wall} $W$ associated to an abstract wall $\bar W$ is a graph with a locally injective map $\phi:W\to K$ defined as follows:
\begin{itemize}
\item for each $1$-cell $a$ in $\bar W$ there is a vertex $v_a$ in $W$,
\item $\phi(v_a)$ is the center of $a$,
\item for each pair of $1$-cells $a,a'$ in $\bar W$ and each $2$-cell $C$ in which $a,a'$ are parallel, there is an edge $(v_a,v_{a'})$ in $W$,
\item the edge $(v_{a},v_{a'})$ is sent by $\phi$ to an arc in $C$ joining $\phi(v_a)$ and $\phi(v_{a'})$.
\end{itemize}
The wall $W$ is \emph{dual} to each $1$-cell in $\bar W$. The wall $W$ is \emph{adjacent to $x$ at a vertex $v$} of $\link(x)$, if it is dual to the $1$-cell corresponding to $v$. The wall $W$ is \emph{adjacent to $x$ at an edge $e$} of $\link(x)$, if $W$ is not adjacent at either endpoint of $e$ but is dual to a pair of $1$-cells in $\partial_\p C$ where $C$ is a $2$-cell corresponding to $e$. We say that the wall $W$
\begin{itemize}
\item \emph{embeds} if $W\to K$ is injective,
\item is \emph{two-sided} if $W\to K$ extends to an embedding $W\times (-1,1)\to K$,
\item \emph{self-osculates at $x$} if it is adjacent to $x$ at more than one vertex and/or edge of $\link(x)$. See Figure \ref{self-osculation}.
\end{itemize}
\begin{figure}
\begin{tikzpicture}[scale=0.5]
\draw[thick] (210: 1) to (240: 1.732) to (270: 2) to (300:1.732) to (330: 1) to (0:1.732) to (30:2) to (60:1.732) to (90:1) to (120:1.732) to (150:2) to (180:1.732) to (210:1) to (0:0) to (90:1);
\draw[thick]  (330:1) to (0:0);
\path[draw,thick,red] (-2.2,0.65) to[out=-30,in=180] (-1.75, 0.5) to[out=0,in=180] (1.75,0.5) to[out = 0, in = 120] (3,0) to[out=300,in=10] (2.5, -1.75) to[out=190,in=0] (0.5*1.732,-1) to[out =180, in=0] (-0.5*1.732, -1) to[out=180,in=60] (-1.5,-1.2);
\end{tikzpicture}
\begin{tikzpicture}[scale = 0.5]
\draw[thick] (-1,0) to (-1.6, 0.6) to (-1.6, 1.6) to (-0.6, 1.6) to (0, 1) to (0.6, 1.6) to (1.6,1.6) to (1.6, 0.6) to (1,0) to (1.6, -0.6) to (1.6, -1.6) to (0.6, -1.6) to (0, -1) to (-0.6, -1.6) to (-1.6, -1.6) to (-1.6, -0.6) to (-1,0) to (0,0) to (1,0);
\draw[thick] (0,1) to (0,-1);
\path[draw, thick, red] (-2.1, 1.3) to[in=180, out=-40] (-1.6, 1.1) to[in=180, out=0] (0,1/2) to[in=180, out=0] (1.6, 1.1) to[in=90, out=0] (2.8,0) to[in=0, out=270] (1.6, -1.1) to[in=0, out=180] (0,-1/2) to[in=0, out = 180] (-1.6, -1.1) to[in=40, out=0] (-2.1, -1.3);
\end{tikzpicture}\,\,\,\,
\begin{tikzpicture}[scale = 0.5]
\draw[thick] (210: 1) to (240: 1.732) to (270: 2) to (300:1.732) to (330: 1) to (0:1.732) to (30:2) to (60:1.732) to (90:1) to (120:1.732) to (150:2) to (180:1.732) to (210:1) to (0:0) to (90:1);
\draw[thick]  (330:1) to (0:0);
\path[draw, thick ,red] (-3*1.732/4 - 0.5, -0.45) to [in=240, out= 10] (-3*1.732/4,-0.25) to[in=240,out=60] (-1.732/4,1.25) to[in=170,out=60] (1.5,2) to[in=110,out=-10] (2.8,1) to[in=10,out=290] (2,-0.9) to[in=0,out=190] (0.866, -1) to[in=0,out=180] (-0.866,-1) to[out=180, in=60] (-1.2, -1.2);
\end{tikzpicture}

\caption{In each case above, the wall self-osculates at the central vertex.}\label{self-osculation}
\end{figure}
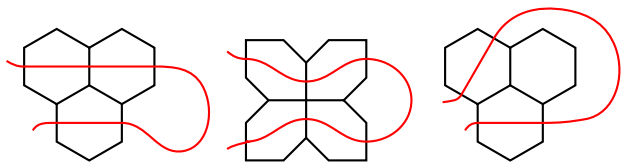

\subsection{Orientation of walls}
An embedded wall $W\to K$ is two-sided if and only if there is a globally consistent orientation of its dual $1$-cells such that parallel $1$-cells in any $2$-cell $C$ have opposite orientations in $\partial_\p C$. An \emph{orientation} of a two-sided wall $W$ is one of two globally consistent orientations of its dual $1$-cells. Let $\mathcal W$ be the set of all walls in $K$. An \emph{orientation} on $\mathcal W$ is a choice of orientation on each $W\in\mathcal W$.

\subsection{Bestvina-Brady Morse theory}
An \emph{affine complex} $K$ has cells that are convex Euclidean polyhedra, which metrically agree on their faces.
A map $f:K \to \reals$ is a \emph{Morse function} if it is linear on each cell $C$, constant on $C$ if only if $\dim C = 0$, and the image $f(K^0)$ of the $0$-skeleton is a closed discrete subset of $\reals$. It follows that the restriction of $f$ to a cell has a unique minimum and maximum.

Let $x\in K^0$. A vertex $v\in \link(x)$ is \emph{ascending} (resp. \emph{descending}) if the corresponding $1$-cell is oriented away from $x$ (resp. toward $x$). An edge $e\in\link(x)$ is \emph{ascending} (resp. \emph{descending}) if each wall passing through the corresponding $2$-cell is oriented away from $x$ (resp. toward $x$). The \emph{ascending link} $\link_{\uparrow}(x)$ (resp. \emph{descending link} $\link_{\downarrow}(x)$) is the subgraph of $\link(x)$ consisting of all ascending (resp. descending) vertices and edges.

We will employ the following result of Bestvina-Brady proven in \cite[Thm~4.1]{BestvinaBrady97}:
\begin{thm} \label{bestvina-brady thm} Let $K$ be a finite (aspherical) affine cell complex. Consider a map $K\to S^1$ that lifts to a Morse function $\widetilde K\to \mathbb R$. If $\link_{\uparrow}(x)$ and $\link_{\downarrow}(x)$ are nonempty and connected for each $x\in \widetilde K^{0}$, then $\ker(\pi_1K\to \mathbb Z)$ is finitely generated.\end{thm}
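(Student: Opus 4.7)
The plan is to pass to the $\mathbb{Z}$-cover $Y := \widetilde{K}/N$ of $K$ corresponding to $N := \ker(\pi_1 K \to \mathbb{Z})$. Then $\pi_1(Y) = N$, and the Morse function descends to $\bar{f} : Y \to \mathbb{R}$, equivariant with respect to the $\mathbb{Z}$-deck action which translates $\bar{f}$ by integers. Because $K$ is finite, the subcomplex $Y_{[a,b]} := \bar{f}^{-1}([a,b])$ is a finite-sheeted cover of a compact portion of $K$, so it is itself a compact subcomplex and its fundamental group is finitely generated. The problem thus reduces to showing that the inclusion $Y_{[a,b]} \hookrightarrow Y$ is $\pi_1$-surjective for some $a < b$; then $N = \pi_1(Y)$ is a quotient of the finitely generated group $\pi_1(Y_{[a,b]})$.

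The engine is the standard Morse deformation lemma: if $c$ is a critical value of $\bar{f}$ attained at a unique vertex $x$, then up to homotopy $Y_{\leq c+\epsilon}$ is obtained from $Y_{\leq c-\epsilon}$ by attaching the cone on $\link_{\downarrow}(x)$ along its defining embedding, and dually $Y_{\geq c-\epsilon}$ is obtained from $Y_{\geq c+\epsilon}$ by attaching the cone on $\link_{\uparrow}(x)$. When the relevant link is connected and nonempty, van Kampen shows that such a cone attachment is $\pi_1$-surjective (quotienting by the normal closure of the image of $\pi_1$ of the link) and preserves connectedness. Iterating across finite ranges of critical values, and using that compact loops are supported in some $Y_{\leq n}$, I obtain
\[
\pi_1(Y_{\leq b_1}) \twoheadrightarrow \pi_1(Y_{\leq b_2}) \twoheadrightarrow \pi_1(Y) \qquad (b_1 \leq b_2),
\]
and the dual surjection $\pi_1(Y_{\geq a_1}) \twoheadrightarrow \pi_1(Y)$ for the ascending-link side. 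Applying the very same argument to $\bar{f}$ restricted to $Y_{\leq b}$ (using the ascending-link hypothesis as one shrinks the lower bound) yields $\pi_1(Y_{[a,b]}) \twoheadrightarrow \pi_1(Y_{\leq b})$. Composing gives $\pi_1(Y_{[a,b]}) \twoheadrightarrow \pi_1(Y)$, completing the argument.

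The main obstacle is the Morse deformation lemma itself. One must verify that as $\bar{f}$ crosses a critical vertex $x$, the half-cells newly added to the sublevel set deformation retract onto $x$ via a retraction whose trace on $Y_{\leq c-\epsilon}$ is exactly $\link_{\downarrow}(x)$. This is a cell-by-cell check using linearity of $\bar{f}$ on each affine cell and compatibility on faces: on a cell $C$ where $x$ is the maximum, the intersection $C \cap Y_{\leq c-\epsilon}$ retracts to the codimension-one face of $C$ opposite $x$, and these faces assemble to the edge of $\link_{\downarrow}(x)$ contributed by $C$. Basepoint tracking across non-compact level sets, when invoking van Kampen, is handled cleanly by exhausting $Y$ with the compact subcomplexes $Y_{[-n,n]}$ and observing that every loop lies in one of them.
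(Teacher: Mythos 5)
The paper does not prove this statement; it is quoted directly from Bestvina--Brady \cite[Thm~4.1]{BestvinaBrady97}, and your proposal is essentially the standard argument from that source: pass to the infinite cyclic cover $Y$ with $\pi_1 Y = N$, use compactness of $Y_{[a,b]}$, and show $\pi_1$-surjectivity of $Y_{[a,b]}\hookrightarrow Y$ by coning off nonempty connected ascending/descending links as the interval expands. The outline is correct; the only imprecision worth noting is that a level may contain several (indeed infinitely many, in the noncompact cover) vertices, so the deformation lemma should attach all the corresponding cones simultaneously rather than assuming a unique critical vertex per value --- this changes nothing since the cones are attached along disjoint links.
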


\subsection{An orientation induces a combinatorial map $K^{1}\to S^1$}\label{orienatation induces a map} Let $S^1$ have a cell structure with one $0$-cell and one (oriented) $1$-cell. Each orientation on $\mathcal W$ determines an orientation preserving combinatorial map $K^{1}\to S^1$. The map $\partial_\p C\to S^1$ is null-homotopic for each $2$-cell $C$, since pairs of opposite $1$-cells in $\partial_\p C$ travel in opposite directions around $S^1$. Thus the map $K^{1}\to S^1$ extends to $K \to S^1$. The map $K\to S^1$ lifts to $\widetilde K\to \widetilde S^1\simeq \mathbb R$, but the restriction of this map to a $2$-cell does not necessarily have a unique minimum or maximum.

The \emph{lawful subcomplex}
 $Y\subset K$ is the subcomplex of $K$ obtained by discarding $2$-cells whose attaching maps cannot be expressed as the concatenation $\alpha\beta^{-1}$ where $\alpha\to K^1$, $\beta\to K^1$ are positively directed paths. The restriction $Y\to S^1$ of the map $K\to S^1$ lifts to $\widetilde Y \to \mathbb R$ which is a Morse function in the sense of Bestvina-Brady.

\section{Main Theorem}
The \emph{Coxeter group of uniform exponent $m$ and rank $r$} is the Coxeter group $G_{(r,m)}$ with the following presentation:
$$\langle a_1,\ldots, a_r \mid a_i^2, (a_ia_j)^m \ : \ 1 \leq i <  j \leq r\rangle$$
The standard $2$-complex of the above presentation for $G_{(r,m)}$ is denoted by $X_{(r,m)}$.

\begin{thm}\label{thm:uniform case}
For each $m\geq 3$ there exists $R_m$ such that for all $r\geq R_m$
the group $G_{(r,m)}$ has a finite index torsion-free subgroup $G'$ that admits an epimorphism $G'\rightarrow \integers$ whose kernel $N$ is finitely generated.
\end{thm}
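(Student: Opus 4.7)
The plan is to apply Bestvina--Brady Morse theory (Theorem~\ref{bestvina-brady thm}) to the compression $\overline X$ of a suitable torsion-free finite cover of $X_{(r,m)}$, after equipping $\overline X$ with a probabilistically chosen wall orientation. First, using residual finiteness of $G_{(r,m)}$ and separability of the wall stabilisers, I would select a torsion-free finite-index subgroup $G'\le G_{(r,m)}$ such that in the associated $\overline X$ every wall embeds, is two-sided, and does not self-osculate. A standard separability argument should yield such $G'$ with index $d:=[G_{(r,m)}:G']$ at most singly exponential in $r$, which will matter for the union bound below.

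Next, I would orient each wall of $\overline X$ independently and uniformly at random. By Section~\ref{orienatation induces a map} this produces a combinatorial map $\overline X \to S^1$, hence a homomorphism $\varphi\colon G'\to\integers$, whose restriction to the lawful subcomplex $Y\subseteq\overline X$ lifts to a genuine Morse function $\widetilde Y\to\reals$. By the counts in Section~\ref{sec:euler}, at each vertex $x\in\overline X^{0}$ exactly $r$ edges (one per generator) and $\binom{r}{2}$ $2$-cells (one per pair of generators) are incident, so $\link(x)$ is a copy of the complete graph $K_r$ with a link-vertex $v_i$ for each generator $a_i$ and a link-edge $e_{ij}$ for each incident $2m$-gon $C_{ij}$. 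The vertex $v_i$ belongs to $\link_{\uparrow}(x)$ precisely when the wall through the $a_i$-edge at $x$ is oriented away from $x$, an event of probability $\tfrac12$. The edge $e_{ij}$ belongs to $\link_{\uparrow}(x)$ precisely when all $m$ walls passing through $C_{ij}$ point away from $x$; this has unconditional probability $2^{-m}$ and conditional probability $2^{-(m-2)}$ given that $v_i,v_j\in\link_{\uparrow}(x)$. Restricted to its ascending vertex set, $\link_{\uparrow}(x)$ is therefore distributed essentially as an Erd\H{o}s--R\'enyi random graph on $\sim r/2$ vertices with edge probability $2^{-(m-2)}$, and symmetrically for $\link_{\downarrow}(x)$.

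The principal obstacle is to ensure that, with positive probability, $\link_{\uparrow}(x)$ and $\link_{\downarrow}(x)$ are both nonempty and connected at every $x\in\overline X^{0}$ simultaneously. For a single vertex, standard estimates for Erd\H{o}s--R\'enyi graphs with constant edge probability bound the probability of a bad link by $O\!\bigl(r\,(1-2^{-(m-2)})^{r/2}\bigr)$, which is exponentially small in $r$ for fixed $m$. Walls are global, so the links at different vertices are not independent, but Boole's inequality still gives $\mathbb{P}[\text{some vertex is bad}]\le 2d\cdot\mathbb{P}[\text{a fixed vertex is bad}]$, which tends to $0$ as long as $d$ grows more slowly than the reciprocal of the per-vertex failure probability; this is why the index bound obtained in the first step is crucial. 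Fixing any orientation realising the good event, the hypotheses of Theorem~\ref{bestvina-brady thm} are met on $Y$ and we conclude that $N:=\ker\varphi$ is finitely generated, while surjectivity of $\varphi$ follows from the existence of at least one wall crossed nontrivially by some loop of $\overline X$.
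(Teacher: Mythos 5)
Your overall architecture matches the paper's: good walls in a compressed finite cover, random wall orientations, a per-vertex failure estimate, a union bound over $0$-cells, then Bestvina--Brady. The probabilistic estimate you sketch is essentially the paper's $P_1+P_2$ computation. But there is a genuine gap at the single point you yourself flag as crucial: the bound on the degree $d$ of the cover. You assert that ``a standard separability argument should yield such $G'$ with index at most singly exponential in $r$.'' This is doubly problematic. First, separability arguments give no effective control of the index at all; nothing standard produces a quantitative bound. Second, and more importantly, a singly exponential bound would not suffice. The per-vertex failure probability is of order $\binom{r}{2}\bigl(1-2^{-(2m-3)}\bigr)^{r-2}$, i.e.\ $r^2\gamma^{r}$ with $\gamma=1-2^{-(2m-3)}$ extremely close to $1$ (already $\gamma=7/8$ for $m=3$, and $\gamma\to 1$ as $m$ grows). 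For the union bound $d\cdot r^2\gamma^{r}\to 0$ you need $d$ to grow more slowly than $\gamma^{-r}$, an exponential whose base is barely above $1$; any bound of the form $C^{r}$ with $C\ge\gamma^{-1}$ kills the argument. So ``singly exponential'' is not a usable hypothesis --- you need subexponential, and in practice polynomial, degree.

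This is exactly the content of the paper's Proposition~\ref{prop:finite cover}, which is the real work your proposal omits. The paper applies separability of wall stabilizers only to the fixed small group $G_{(4,m)}$, obtaining one finite quotient $Q$ with good walls, and then pulls this back to $G_{(r,m)}$ along $k(r)=O(\log r)$ retraction-style homomorphisms $\phi_p\colon G_{(r,m)}\to G_{(4,m)}$ induced by partitions $p\colon\{1,\dots,r\}\to\{1,2,3,4\}$ chosen (by a counting argument, Lemma~\ref{partitions}) so that every quadruple of indices is separated by some $p$. Any wall pathology in $\overline X_{(r,m)}$ involves at most four generators and hence projects to a pathology in $\overline X_{(4,m)}$ under a suitable $\phi_p$, giving embedded, two-sided, non-self-osculating walls in a cover of degree $|Q|^{k(r)}\le |Q|\,r^{C}$, which is polynomial in $r$ and therefore loses to the exponentially small failure probability. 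Without this (or some substitute producing subexponential degree), the union bound in your final step cannot be closed, so the proposal as written does not prove the theorem.
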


\begin{cor}\label{cor:uniform}For $m\geq 3$, the group
$G_{(r,m)}$ is incoherent for all sufficiently large $r$.
\end{cor}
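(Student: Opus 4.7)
The plan is to deduce incoherence from Theorem~\ref{thm:uniform case} via an Euler characteristic obstruction. Since incoherence is inherited from finite-index subgroups, we focus on the torsion-free finite-index subgroup $G' \leq G_{(r,m)}$ supplied by Theorem~\ref{thm:uniform case}, together with its exact sequence $1 \to N \to G' \to \integers \to 1$ in which $N$ is finitely generated. It therefore suffices to show that $N$ is not finitely presented; this exhibits a finitely generated non-finitely-presented subgroup in both $G'$ and $G_{(r,m)}$.

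Because $m \geq 3$ we have $\tfrac{3}{m} \leq 1$, so $G_{(r,m)}$ has dimension~$\leq 2$ in the sense of Section~\ref{sec:euler}. The compression of the cover of $X_{(r,m)}$ corresponding to $G'$ is then a finite aspherical $2$-complex serving as a $K(G',1)$, so $G'$ has cohomological dimension $\leq 2$, and formula~\eqref{eulerformula} yields
\[ \euler(G') = [G_{(r,m)}:G'] \cdot \left( 1 - \frac{r}{2} + \frac{(r-1)r}{4m} \right), \]
which is strictly positive once $r$ is sufficiently large (with $m$ fixed).

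Suppose for contradiction that $N$ is finitely presented, equivalently of type $\mathrm{FP}_2$. As a subgroup of the torsion-free group $G'$, $N$ is torsion-free with $\mathrm{cd}(N) \leq 2$, so $\homology_q(N;\rationals)$ is finite-dimensional for $q \leq 2$ and vanishes for $q \geq 3$. The Hochschild--Serre spectral sequence of $1 \to N \to G' \to \integers \to 1$ with $\rationals$-coefficients collapses at $E^2$, giving short exact sequences that express $\homology_*(G';\rationals)$ in terms of $\homology_0(\integers;\homology_*(N;\rationals))$ and $\homology_1(\integers;\homology_*(N;\rationals))$. Over $\rationals$, for any finite-dimensional $\integers$-module $M$ one has $\dim \homology_0(\integers;M) = \dim \homology_1(\integers;M)$, since both are the dimension of the $(1-t)$-cokernel (equivalently, kernel). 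Taking alternating sums of dimensions therefore forces $\euler(G') = 0$, contradicting the strict positivity above. Hence $N$ is finitely generated but not finitely presented, and $G'$---and therefore $G_{(r,m)}$---is incoherent.

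The main point to check carefully is the finite-dimensionality of $\homology_q(N;\rationals)$ in all degrees: $\mathrm{FP}_2$ (coming from finite presentability) handles $q \leq 2$, while the dimension~$\leq 2$ hypothesis on $G_{(r,m)}$ together with torsion-freeness of $N$ handles $q \geq 3$. Once this is in place, the Hochschild--Serre collapse and the identity $\dim \homology_0(\integers;M) = \dim \homology_1(\integers;M)$ are routine, and the contradiction with $\euler(G')>0$ is immediate. The only other slightly non-routine point is verifying that incoherence genuinely transfers from $G'$ to $G_{(r,m)}$, which is standard since a finitely generated non-finitely-presented subgroup of $G'$ is also such a subgroup of $G_{(r,m)}$.
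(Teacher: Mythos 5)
Your proof is correct, but it substitutes a genuinely different key step for the one the paper uses. Both arguments assume for contradiction that $N$ is finitely presented and derive $\euler(G')=0$, contradicting $\euler(G')>0$ for large $r$ (the same Euler characteristic computation from Section~\ref{sec:euler}); the difference is in how the vanishing is obtained. The paper invokes Bieri's structure theorem --- a nontrivial finitely presented normal subgroup of a group of cohomological dimension $\leq 2$ is free or of finite index --- rules out finite index since $G'/N\cong\integers$, and then computes $\euler(G')=\euler(N)\cdot\euler(\integers)=0$ for the resulting free-by-cyclic group. You bypass Bieri entirely: from $\mathrm{FP}_2$ and $\mathrm{cd}(N)\leq 2$ you get finite-dimensional $\rationals$-homology of $N$ in all degrees, and the two-column Hochschild--Serre spectral sequence over $\integers$, together with $\dim\kernel(t-1)=\dim\mathrm{coker}(t-1)$ for an endomorphism of a finite-dimensional vector space, gives $\euler(G')=0$ directly --- in effect you prove multiplicativity of the Euler characteristic for this extension without needing to know that $N$ is free. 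Your route is more self-contained and proves the slightly stronger statement that $N$ is not of type $\mathrm{FP}_2$ over $\rationals$; the paper's route is shorter on the page because Bieri's theorem absorbs the homological work. One small wording issue: finite presentability is not \emph{equivalent} to $\mathrm{FP}_2$ (the Bestvina--Brady construction underlying this very paper is the source of counterexamples), but you only use the implication ``finitely presented $\Rightarrow$ $\mathrm{FP}_2$,'' so nothing is harmed.
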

\begin{proof}A result of Bieri in \cite{BieriBook81} states that a nontrivial finitely presented normal subgroup of a group of cohomological dimension $\leq 2$ is either free or of finite index.
Since $[G':N] = \infty$ it remains to exclude the case where $N$ is free, whence:
\[
\euler(G') = \euler(N)\cdot \euler(\integers) = (1-\rank(N))\cdot 0 = 0.\]
This is impossible for all sufficiently large $r$, since then $\euler(G')>0$ (see Section~\ref{sec:euler}).
\end{proof}

A {\em Coxeter subgroup} is generated by a subset of the generators of $G$.
It is presented by those generators together with all relators
in those generators appearing in the presentation of $G$~\cite{DavisCoxeterBook2008}. We now prove the main result stated in the introduction:
\begin{thm}\label{thm:nonuniform case}
For each $M$ there exists $R=R(M)$ such that if $K$ is a Coxeter group with $3\leq m_{ij}\leq M$ and rank $r\geq R$ then $K$ is incoherent.
\end{thm}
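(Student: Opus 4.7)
The plan is to reduce the nonuniform case to the uniform exponent setting already handled in Corollary~\ref{cor:uniform}, via a Ramsey-theoretic pigeonhole argument on the edge-labels of $\Upsilon_K$. The hypothesis $3\leq m_{ij}\leq M$ ensures that each label lies in the finite palette $\{3,4,\ldots,M\}$, so $\Upsilon_K$ is an edge-coloring of the complete graph on $r$ vertices using $M-2$ colors.

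For each $m\in\{3,\ldots,M\}$ let $R_m$ denote the constant supplied by Corollary~\ref{cor:uniform}, and set $R^*=\max\{R_m:3\leq m\leq M\}$. I would then define $R(M)$ to be the multicolor Ramsey number guaranteeing that every $(M-2)$-coloring of the edges of a complete graph on $R(M)$ vertices contains a monochromatic clique of size $R^*$. Applied to $\Upsilon_K$ when $r\geq R(M)$, this produces a subset $I\subseteq\{1,\ldots,r\}$ with $|I|=R^*$ such that $m_{ij}=m$ is constant on pairs from $I$, for some $m\in\{3,\ldots,M\}$.

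By the standard parabolic subgroup fact recalled in the paragraph preceding the theorem, the subgroup of $K$ generated by $\{a_i:i\in I\}$ is itself a Coxeter group, presented by the relators $a_i^2$ and $(a_ia_j)^m$ for $i,j\in I$. Hence this Coxeter subgroup is isomorphic to $G_{(R^*,m)}$. Since $R^*\geq R_m$, Corollary~\ref{cor:uniform} applies and shows $G_{(R^*,m)}$ is incoherent; it therefore contains a finitely generated subgroup that fails to be finitely presented. This subgroup is, in particular, a finitely generated subgroup of $K$ that is not finitely presented, so $K$ is incoherent.

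The substantive work has already been done in Theorem~\ref{thm:uniform case} and Corollary~\ref{cor:uniform}; the residual content of Theorem~\ref{thm:nonuniform case} is essentially just the Ramsey extraction combined with the Coxeter subgroup property, together with the trivial observation that incoherence is inherited by overgroups. I do not anticipate any real obstacle beyond taking $R(M)$ to be the appropriate multicolor Ramsey number associated to $R^*$ and $M-2$ colors.
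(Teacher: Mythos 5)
Your proposal is correct and follows essentially the same route as the paper: a multicolor Ramsey extraction of a monochromatic clique in $\Upsilon_K$, identification of the corresponding Coxeter subgroup with a uniform-exponent group $G_{(r,m)}$ handled by Corollary~\ref{cor:uniform}, and the observation that incoherence passes to overgroups. The only cosmetic difference is that you take a single clique size $R^*=\max_m R_m$ rather than the asymmetric Ramsey number $R(R_3,\dots,R_M)$, which changes nothing.
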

\begin{proof}
The multi-color version of Ramsey's theorem \cite{GrahamRothschildSpencer80} states that given a number of colors $c$ and natural numbers $n_1,\dots, n_c$ there exists a number $R = R(n_1,\dots, n_c)$ such that if the edges of a complete graph $\Gamma$ of order at least $R$ are colored with $c$ colors, then for some $i$ there exists a complete subgraph of $\Gamma$ of order $n_i$ with edges of color $i$. Let $c = M$ and $n_i = R_{i}$ of Theorem~\ref{thm:uniform case}. Consequently there exists a uniform exponent Coxeter subgroup $G_{(r,m)}$ of $K$ for some $m\leq M$ and $r = R_m$. By Corollary~\ref{cor:uniform} the subgroup $G_{(r,m)}$ is incoherent and hence so is $K$.
\end{proof}

The above results lend credence to the following:
\begin{conj}\label{conj:positive incoherent}
Let $G$ be a finitely generated infinite Coxeter group of dimension~$\leq 2$. If $\euler(G)>0$ then $G$ is incoherent.
\end{conj}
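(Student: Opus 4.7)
The plan is to run exactly the argument of Corollary~\ref{cor:uniform} with an arbitrary $G$ in place of $G_{(r,m)}$. The entire problem thereby reduces to producing, inside $G$, a finite-index torsion-free subgroup $G'$ that surjects onto $\integers$ with finitely generated kernel $N$. Given such a $G'$, the Euler characteristic formula of Section~\ref{sec:euler} gives $\euler(G')=[G:G']\euler(G)>0$, which contradicts $N$ being free (that would force $\euler(G')=(1-\rank N)\cdot\euler(\integers)=0$), while $[G':N]=\infty$ rules out $N$ being of finite index. Since $G'$ inherits cohomological dimension~$\leq 2$, Bieri's theorem then forbids $N$ from being finitely presented, so $G$ is incoherent.

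Passing to a torsion-free subgroup of finite index is standard for Coxeter groups, so the substantive step is constructing the surjection $G'\twoheadrightarrow\integers$ with finitely generated kernel. Following the Bestvina--Brady recipe of Section~\ref{orienatation induces a map}, this reduces to orienting the walls of the compression $\overline X$ associated to $G'$ so that the lawful subcomplex $Y\subset\overline X$ has nonempty and connected $\link_\uparrow(x)$ and $\link_\downarrow(x)$ at every vertex, whereupon Theorem~\ref{bestvina-brady thm} applies. Guided by the abstract's mention of probabilistic methods (which presumably drive Theorem~\ref{thm:uniform case}), I would orient two-sided walls independently and uniformly and estimate the probability that a given vertex link fails the connectivity condition. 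Heuristically, the quantity $\sum 1/(2m_{ij})$ governing both $\euler(G)$ and the expected ascending-edge density in $\link(x)$ suggests that $\euler(G)>0$ is precisely the correct threshold for random orientations to succeed.

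The principal obstacle is that, unlike Theorem~\ref{thm:uniform case}, the hypothesis $\euler(G)>0$ does not force the rank to be large: groups such as $G_{(5,3)}$ and many mixed-exponent examples satisfy it with relatively few generators, placing us outside any robust asymptotic regime in which random orientation alone could succeed. A purely probabilistic argument therefore will not suffice, and the small-rank cases probably demand a direct combinatorial construction of the orientation on $\overline X$. One must further control self-osculating walls (Figure~\ref{self-osculation}), which can disrupt local link connectedness even when global edge counts are favorable. Closing the gap between the rational inequality $\euler(G)>0$ and the existence of a Morse-theoretic orientation with connected ascending and descending links is the central difficulty, and would require a substantial sharpening of the techniques underlying Theorem~\ref{thm:uniform case}.
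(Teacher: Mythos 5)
This statement is a \emph{conjecture} in the paper, not a theorem: the authors explicitly present it as something their results ``lend credence to,'' and they offer no proof. So there is no proof of the paper's to compare yours against, and your proposal should be judged on whether it actually closes the problem. It does not, and to your credit you say so yourself. The reduction in your first paragraph is sound and is exactly the mechanism of Corollary~\ref{cor:uniform}: given a finite-index torsion-free $G'$ with an epimorphism to $\integers$ whose kernel $N$ is finitely generated, Bieri's theorem plus $\euler(G')=[G:G']\,\euler(G)>0$ forces $N$ to be finitely generated but not finitely presented. The entire content of the conjecture is therefore the existence of such a $G'$ and epimorphism, and that is where your argument stops being a proof.

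The concrete gap is the one you flag in your last paragraph, and it is worth being precise about why it is fatal rather than merely inconvenient. The probabilistic method of Section~\ref{probability} succeeds only because the failure probability for a single link, roughly $2^{-(r-1)}+\binom{r}{2}\frac12(1-2^{-(2m-3)})^{r-2}$, eventually beats the polynomial bound on $|\overline X^0|$ from Proposition~\ref{prop:finite cover}; this is an asymptotic statement in $r$ with a threshold $R_m$ that has no reason to coincide with the threshold at which $\euler(G_{(r,m)})$ becomes positive, and for mixed-exponent or small-rank groups with $\euler(G)>0$ the union bound simply gives a number larger than $1$. Your heuristic that $\sum 1/(2m_{ij})$ governs both $\euler(G)$ and the expected ascending-edge density is suggestive but unsubstantiated: nothing in the paper's estimates ties the sign of $\euler(G)$ to the success of a random orientation, and the required ``direct combinatorial construction'' for the remaining cases is precisely the missing idea. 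In short, your proposal is a correct restatement of the strategy the authors themselves would presumably pursue, together with an accurate diagnosis of why it does not yet work; it is not a proof, which is consistent with the statement remaining a conjecture in the paper.
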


\subsection{A polynomial degree finite cover of $X_{(r,m)}$ with good walls}\label{finite cover r}
The goal of this subsection is to prove the following:
\begin{prop}\label{prop:finite cover}
There is a homomorphism $\beta:G_{(r,m)}\to Q^{k(r)}$ such that the compression $\overline X_{(r,m)}$ of the induced cover $\widehat X_{(r,m)}\to X_{(r,m)}$ has the following property: each wall is $2$-sided, embedded and has no self-osculation.

Moreover $|\overline X^0_{(r,m)}|$ is at most $|Q|^{k(r)}\leq |Q| r^C$ for some constant $C$.
\end{prop}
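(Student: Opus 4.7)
The plan is to construct $\beta$ by a probabilistic method. Let $Q$ be the dihedral group of order $2m$. Any assignment of the generators $a_1,\ldots,a_r$ of $G_{(r,m)}$ to reflections in $Q$ extends to a homomorphism $G_{(r,m)}\to Q$, since any product of two reflections in $Q$ has order dividing $m$. Set $k(r)=\lceil C\log r\rceil$ for a constant $C$ to be chosen below, choose $k(r)$ such assignments independently and uniformly at random, giving homomorphisms $\beta_1,\ldots,\beta_{k(r)}$, and let $\beta=(\beta_1,\ldots,\beta_{k(r)})\colon G_{(r,m)}\to Q^{k(r)}$. Taking $G'=\ker\beta$, one gets $|\overline X^0_{(r,m)}|\leq [G_{(r,m)}:G']\leq |Q|^{k(r)}$, which gives the asserted size bound once $C$ is fixed.

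To verify the wall properties, I would reformulate each failure as a group-theoretic statement. Each wall of $\overline X_{(r,m)}$ is the image of a wall $\tilde W$ of the universal cover $\widetilde{\overline X}_{(r,m)}$, and the walls of the universal cover are automatically embedded, two-sided, and non-self-osculating by the standard Coxeter-theoretic convexity of walls. A failure in the quotient therefore amounts to two distinct walls $\tilde W_1\neq\tilde W_2$ of the universal cover, both adjacent to a common vertex $\tilde x$, being identified by the action of some element of $G'$; equivalently, there is an element $g\in G_{(r,m)}$ with $g\tilde W_1=\tilde W_2$ and $\beta(g)=1$.

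By equivariance it suffices to take $\tilde x$ to be the identity vertex. The walls adjacent to the identity are parametrized by (i) the $r$ generators $a_i$ (vertex-adjacencies), and (ii) pairs $\{i,j\}$ together with one of $m-2$ internal walls of the corresponding $2m$-gon (edge-adjacencies), yielding $O(r^2)$ walls and hence $O(r^4)$ candidate pairs. For each candidate pair, there is a specific witness $g\in G_{(r,m)}$, a word in at most four of the generators whose length is bounded in terms of $m$ alone, such that the bad event becomes $\beta(g)=1$. Under a single random reflection-assignment $\beta_\ell$, a short calculation in $Q$ bounds the probability of $\beta_\ell(g)=1$ by some constant $p<1$ depending only on $|Q|$ (witnesses of odd word length always map to reflections, hence never to $1$; witnesses of even word length map to rotations, and the probability of hitting the identity rotation is at most $1/m$). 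By independence of the $k(r)$ factors, $\Pr[\beta(g)=1]\leq p^{k(r)}$, and a union bound over the $O(r^4)$ candidate pairs gives total failure probability at most $O(r^4)\,p^{k(r)}<\tfrac12$ once $C$ is large enough. A parallel argument secures that $G'$ is torsion-free (needed so that $\widehat X_{(r,m)}$ is a genuine cover): every finite subgroup of $G_{(r,m)}$ is contained in some $\langle a_i,a_j\rangle\cong Q$, so it suffices that $\beta$ is injective on each such dihedral pair subgroup, and this is established by the same probabilistic method.

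The principal difficulty is establishing the combinatorial dictionary: explicitly listing, for each type of geometric failure --- non-embedding, loss of two-sidedness, and each flavor of self-osculation (vertex-vertex, vertex-edge, edge-edge) --- the finite collection of witnessing elements $g$, and verifying both that they involve only a bounded number of generators and that the per-witness probability bound $p<1$ holds uniformly. Once this dictionary is in place, the per-pair probability bound, the union bound, and the degree count are routine.
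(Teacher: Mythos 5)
Your approach is genuinely different from the paper's: you randomize homomorphisms to the dihedral group $D_{2m}$ directly and try to kill each wall pathology by a union bound over bounded-length witnesses, whereas the paper reduces to the rank-$4$ group $G_{(4,m)}$ via the retractions $\phi_p(a_i)=a_{p(i)}$ attached to a family of $k(r)=O(\log r)$ partitions separating every quadruple of generators (Lemma~\ref{partitions}), and then invokes a single good finite quotient $Q$ of $G_{(4,m)}$ whose existence comes from separability of wall stabilizers \cite{HaglundWiseCoxeter}. Since every wall pathology involves at most four generators, it pushes forward under a separating $\phi_p$ to a pathology in the rank-$4$ cover, which is impossible. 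The paper's route completely sidesteps the ``combinatorial dictionary'' you flag as the principal difficulty, at the cost of an inexplicit constant $|Q|$.

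The gap in your write-up sits exactly at that dictionary, and it is not merely an omitted verification. The bad event ``some $g\in G'$ satisfies $g\tilde W_1=\tilde W_2$'' is not the vanishing of $\beta$ on a single element: the set of such $g$ is a coset of the stabilizer of $\tilde W_1$, which is an infinite subgroup, so as stated there is no ``specific witness $g$'' of bounded length, and this is precisely the point at which the paper instead reaches for separability of wall stabilizers. Your scheme can likely be repaired: walls of the compressed universal cover correspond bijectively to reflections, $g\tilde W_1=\tilde W_2$ means $g\rho_1g^{-1}=\rho_2$, and if $g\in\ker\beta$ this forces $\beta(\rho_1)=\beta(\rho_2)$; hence the single element $\rho_1\rho_2$ (a word of length bounded in terms of $m$ in at most four generators) is a legitimate witness whose survival under some $\beta_\ell$ rules out the identification. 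But that bridge from an infinite coset condition to one short witness is the crux, and it is absent from your proposal. Two smaller issues: the per-witness bound ``at most $1/m$'' fails for witnesses of the form $(a_ia_j)^s$ (for $m=4$, $s=2$ the probability that $(R_iR_j)^2=1$ is $1/2$), although a uniform bound $p\leq 1-\frac1m<1$ does hold and suffices; and two-sidedness is listed but never reduced to a witness at all, whereas in the paper it falls out of a single projection to the rank-$4$ case.
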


The proof of Proposition~\ref{prop:finite cover} appears at the end of this subsection.

A \emph{partition} of a set $S$ is a map $p:S\to \{1,2,3,4\}$. The partition $p$ \emph{separates}
  $a,b,c,d$  if $p(a), p(b), p(c), p(d)$ are distinct.
\begin{lem}\label{partitions}Let $S$ have cardinality $ r\geq 4$. There is a collection of $k =k(r)=\bigg\lceil \frac{\log{r\choose4}}{\log\frac {32} {29}} \bigg\rceil$ partitions such that each quadruple of distinct elements of $S$ is separated by this collection.
\end{lem}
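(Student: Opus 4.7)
The plan is to apply the probabilistic method. I would choose $p_1,\dots,p_k\colon S\to\{1,2,3,4\}$ independently at random, with each $p_i$ assigning each element of $S$ to one of the four values uniformly and independently of all other choices.

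For a fixed quadruple $\{a,b,c,d\}\subset S$ and a single random partition $p$, the values $p(a),p(b),p(c),p(d)$ are i.i.d.\ uniform on $\{1,2,3,4\}$, so $p$ separates $\{a,b,c,d\}$ exactly when these four values form a permutation of $\{1,2,3,4\}$, which occurs with probability $\frac{4!}{4^4}=\frac{3}{32}$. Hence $p$ fails to separate the quadruple with probability $\frac{29}{32}$, and by independence the probability that none of $p_1,\dots,p_k$ separates it is $\bigl(\frac{29}{32}\bigr)^k$. A union bound over the $\binom{r}{4}$ quadruples then shows that the probability that some quadruple is unseparated by the entire collection is at most $\binom{r}{4}\bigl(\frac{29}{32}\bigr)^k$.

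With the prescribed $k=\lceil\log\binom{r}{4}/\log(32/29)\rceil$, this upper bound is at most $1$; since $\gcd(29,32)=1$, the number $(32/29)^k$ is not an integer for any $k\ge 1$, so the bound is in fact strictly below $1$ whenever $\binom{r}{4}\ge 2$. Therefore some realization of $(p_1,\dots,p_k)$ succeeds in separating every quadruple, yielding the desired deterministic collection. No substantive obstacle arises; the only delicate point is confirming strict inequality in the union bound, which is immediate from the coprimality of $29$ and $32$ (and the degenerate case $r=4$ is handled by taking any single bijection $S\to\{1,2,3,4\}$).
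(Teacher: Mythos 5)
Your proof is correct and follows essentially the same route as the paper: a union bound over the $\binom{r}{4}$ quadruples using the per-partition failure probability $\frac{29}{32}$, with the same choice of $k$. The paper phrases the argument by counting $k$-element subsets of the $4^r$ partitions rather than sampling independently with replacement (so it derives strictness from the inequality $\binom{\frac{29}{32}4^r}{k}<\bigl(\frac{29}{32}\bigr)^k\binom{4^r}{k}$ where you instead invoke coprimality of $29$ and $32$), but this is only a cosmetic difference.
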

\begin{proof} Let $M$ denote the set of all partitions of $S$, and note that $|M| = 4^r$.
Let $\mathcal M_k$ denote the collection of cardinality $k$ subsets of $M$ and note that $|\mathcal M_k|={4^r \choose k}$.
Let $\mathcal N_k\subset \mathcal M_k$ be the subcollection consisting of sets of $k$ partitions that do not separate some quadruple. We want to show that $|\mathcal N_k|<|\mathcal M_k|$. Let $\mathcal N_k(\{a,b,c,d\})\subset\mathcal M_k$ be the subcollection of sets that fail to separate $a,b,c,d\in S$. We have
\[
|\mathcal N_k|\leq {r\choose 4}|\mathcal N_k(\{a,b,c,d\})|
\]
since there are ${r\choose 4}$ quadruples $\{a,b,c,d\}$ of distinct elements of $S$.
There are $4! \cdot 4^{r-4} = 6\cdot 4^{r-3}$ partitions that separate $a,b,c,d$. Thus there are $4^r - 6\cdot 4^{r-3} = \frac{29}{32}\cdot 4^{r}$ partitions that do not separate $a,b,c,d$. We thus have
\[|\mathcal N_k(\{a,b,c,d\})|= {\frac{29}{32}\cdot 4^{r}\choose k}.\]
 Observe that we have the following:
\[
{\frac{29}{32}\cdot 4^{r}\choose k} < \Big(\frac{29}{32}\Big)^k{4^r\choose k}.
\]
Since $k\geq \log{r\choose 4}/\log\frac{32}{29}$ we have
\begin{align*}
{r\choose 4}\Big(\frac{29}{32}\Big)^k\leq1.\end{align*}
Altogether we have
\[
|\mathcal N_k|\leq {r\choose 4}|\mathcal N_k(\{a,b,c,d\})|
<{r\choose4}\Big(\frac{29}{32}\Big)^k{4^r\choose k}\leq {4^r\choose k}=|\mathcal M_k(P)|.
\qedhere\]
\end{proof}

\begin{proof}[Proof of Proposition~\ref{prop:finite cover}]
There is a finite quotient $\psi:G_{(4,m)}\twoheadrightarrow Q$ such that $\ker \psi$ is torsion-free, and the compression $\overline X_{(4,m)}$ of the induced cover $\widehat X_{(4,m)}\to X_{(4,m)}$ has the following property: each wall in $\overline X_{(4,m)}$ is $2$-sided, embedded and has no self-osculation. This follows from the separability of wall stabilizers  \cite{HaglundWiseCoxeter}.

Let $S=\{1,\dots,r\}$. Each partition $p:S\to \{1,2,3,4\}$ defines a homomorphism $\phi_p:G_{(r,m)} \to G_{(4,m)}$ induced by $\phi_p(a_i)=a_{p(i)}.$ Let $$\beta=(\psi\circ\phi_{p_1},\dots, \psi\circ\phi_{p_k}) : G_{(r,m)}\to Q^{k(r)}$$
 where $(p_1,\dots p_k)$ is a collection of partitions from Lemma~\ref{partitions}. For each partition $p$ there is a map $\overline{\phi}_p:X_{(r,m)}\to X_{(4,m)}$ induced by $\phi_p$. We will show that a ``wall pathology'' in $X_{(r,m)}$ would project to a wall pathology in $X_{(4,m)}$ for a suitable $p$ and hence there are no such wall pathologies. Suppose there is a wall $W$ in $X_{(r,m)}$ that self-intersects within a $2$-cell $C$. Let $a_i, a_j$ be the generators of $G_{(r,m)}$ labelling $C$. Let $p\in\{p_1,\dots, p_k\}$ separate $i$  and $j$. The image $\overline \phi_p(W)$ is a wall in $X_{(4,m)}$ that self-intersect, which is a contradiction. Thus walls in $X_{(r,m)}$ embed. We now show that no wall in $X_{(r,m)}$ has a self-osculation. Suppose $W$ in $X_{(r,m)}$ that has a self-osculation at some $0$-cell $x$, let $C,C'$ be $2$-cells adjacent to $x$ such that $W$ is dual to edges in both $C, C'$. Let $a_i, a_j, a_{i'}, a_{j'}$ be generators that label the boundaries of $C, C'$. If $i,j,i',j'$ are distinct consider a partition $p$ that separates them. The image $\overline \phi_p(W)$ is a wall in $X_{(4,m)}$ that has a self-osculation, which is a contradiction. Otherwise $C$ and $C'$ share one label and we let $p$ be a partition that separates the three distinct generators, and the argument is similar. Hence walls do not have self-osculations in $X_{(r,m)}$. Finally, the fact that all walls of $X_{(r,m)}$ are $2$-sided follows by considering a single $\overline\phi_p:X_{(r,m)}\to X_{(4,m)}$.

Finally, to see that the degree is bounded by a polynomial we observe that:
 \[
 |Q|^k \leq |Q|^{\frac{\log{r\choose 4}}{\log\frac {32} {29}}+1} = |Q| {r\choose 4}^{\frac{\log|Q|}{\log\frac {32} {29}}}\leq |Q|r^{\frac{4\log|Q|}{\log\frac {32} {29}}}. \qedhere
 \]
  \end{proof}

\subsection{Probability of empty or disconnected link is exponentially small}\label{probability}
Let $\Gamma$ be a complete graph on $r$ vertices.
%
%
Consider assigning a vertex to be ascending [respectively descending] with probability $\frac12$.
Furthermore, for an edge whose vertices are ascending [descending] assign it to be ascending [descending]
with probability $\frac{1}{2^{m-2}}$. Let $\Gamma_{\uparrow}$ [$\Gamma_{\downarrow}$] be the subgraph of $\Gamma$ consisting of all ascending [descending] vertices and edges.

Observe that  $\Gamma_{\uparrow}$ is assured to be nonempty and connected if
\begin{enumerate}
\item there exists an ascending vertex in $\Gamma$, and
\item for each pair of distinct vertices $v_1, v_2\in\Gamma_{\uparrow}$ there is a third vertex $v_3\in\Gamma_{\uparrow}$ such that $(v_1,  v_3)$ and $(v_2, v_3)$ are edges in $\Gamma_{\uparrow}$.
\end{enumerate}

Let $P_i$ denote the probability that condition~$(i)$ fails to be satisfied. If $\Gamma$ \emph{fails} to be nonempty and connected then at least one of $(1),(2)$ is not satisfied. Consequently
\[
\pr{\Gamma_{\uparrow}\text{ fails}}\leq P_1+P_2.\]
Similarly,
\[
\pr{\Gamma_{\downarrow}\text{ fails}}\leq P_1+P_2.\]

\begin{lem} $P_1 = \frac 1 {2^{r}}.$
\end{lem}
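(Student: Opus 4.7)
The plan is very short because $P_1$ is the probability that condition~(1) fails, i.e.\ that \emph{no} vertex of $\Gamma$ is ascending. Each of the $r$ vertices is independently designated ascending with probability $\tfrac12$, so I would simply observe that the probability that every vertex fails to be ascending is the product $\bigl(\tfrac12\bigr)^{r} = \tfrac{1}{2^r}$.

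In more detail: let $A_v$ be the event that vertex $v$ is ascending. By the random assignment described just before the lemma, $\pr{A_v} = \tfrac12$ and the events $\{A_v\}_{v\in\Gamma}$ are mutually independent. Condition~(1) states that at least one $A_v$ holds, so
\[
P_1 = \pr{\textstyle\bigcap_{v\in\Gamma} A_v^c} = \prod_{v\in\Gamma}\pr{A_v^c} = \prod_{v\in\Gamma}\tfrac12 = \tfrac{1}{2^r}.
\]
There is no real obstacle here; the only point worth noting is that the conclusion depends on the implicit independence of the vertex assignments, which is consistent with how the probabilistic model is set up in Section~\ref{probability}. The lemma serves only to record one of the two elementary estimates $P_1,P_2$ that will then be combined to bound $\pr{\Gamma_{\uparrow}\text{ fails}}$.
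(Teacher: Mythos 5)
Your proof is correct and is essentially the paper's argument: condition (1) fails exactly when all $r$ independent fair coin flips come up non-ascending, giving $\tfrac{1}{2^r}$. The only nuance the paper adds is that, when this abstract model is realized by randomly orienting the walls of $\overline X_{(r,m)}$, the independence of the vertex assignments must be justified by the fact that no wall self-osculates (so each wall meets $\link(x)$ in at most one vertex, and distinct vertices correspond to distinct, independently oriented walls) --- you correctly flag independence as the key hypothesis but take it as given from the model.
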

\begin{proof}Since no wall in $\mathcal W$ has a self-osculation, each wall is adjacent to $x$ at at most one vertex of $\Gamma$. Each of the $r$ vertices in $\Gamma$ is descending with probability $\frac 1 2 $ and these probabilities are independent. Hence $P_1=\frac 1{2^r}$.
\end{proof}

\begin{lem}
$P_2\leq {r\choose 2} \frac 1 4 (1-\frac 1{2^{2m-3}})^{r-2}.
$
\end{lem}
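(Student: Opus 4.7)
The plan is to apply a union bound over all unordered pairs $\{v_1,v_2\}$ of distinct vertices of $\Gamma$ and, for each such pair, estimate the probability that $v_1,v_2$ both lie in $\Gamma_\uparrow$ yet no third vertex $v_3$ serves as the required ``connector'' in $\Gamma_\uparrow$. Since there are $\binom{r}{2}$ pairs, this immediately produces the $\binom{r}{2}$ factor in the stated bound.

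Fix a pair $v_1,v_2$. First, the events ``$v_1$ ascending'' and ``$v_2$ ascending'' are independent and each has probability $\tfrac12$, so $\Pr[v_1,v_2\in \Gamma_\uparrow]=\tfrac14$; this is where the factor $\tfrac14$ comes from. Next, conditioned on $v_1,v_2\in \Gamma_\uparrow$, for any other vertex $v_3$ the event ``$v_3\in \Gamma_\uparrow$ and both $(v_1,v_3),(v_2,v_3)$ are edges of $\Gamma_\uparrow$'' requires $v_3$ to be ascending (probability $\tfrac12$) and then each of the two edges to be ascending (probability $\tfrac{1}{2^{m-2}}$ apiece, by the definition of the edge-orientation process). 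These three coin flips are independent, giving probability
\[
\frac12 \cdot \frac{1}{2^{m-2}} \cdot \frac{1}{2^{m-2}} \;=\; \frac{1}{2^{2m-3}}
\]
that $v_3$ is a valid connector.

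The key observation is that as $v_3$ ranges over the $r-2$ vertices of $\Gamma\setminus\{v_1,v_2\}$, the events ``$v_3$ is a valid connector'' depend on pairwise disjoint sets of random variables (the orientation of $v_3$ and of the edges $(v_1,v_3)$ and $(v_2,v_3)$), and are therefore mutually independent. Hence the probability that none of them is a valid connector is
\[
\left(1-\frac{1}{2^{2m-3}}\right)^{r-2}.
\]
Multiplying these three factors and summing over the $\binom{r}{2}$ pairs yields the claimed bound
\[
P_2 \;\leq\; \binom{r}{2}\,\frac{1}{4}\left(1-\frac{1}{2^{2m-3}}\right)^{r-2}.
\]
No step is really an obstacle; the only point that requires a moment of care is verifying the mutual independence of the connector-events over different choices of $v_3$, which follows because the underlying random variables are disjoint once $v_1,v_2$ are fixed.
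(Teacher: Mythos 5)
Your proof is correct and follows essentially the same route as the paper's: a union bound over the $\binom{r}{2}$ pairs, the factor $\frac14$ for both endpoints being ascending, the probability $\frac{1}{2^{2m-3}}$ for a given $v_3$ to be a connector, and independence over the $r-2$ candidates giving the factor $\bigl(1-\frac{1}{2^{2m-3}}\bigr)^{r-2}$. Your explicit justification of the mutual independence of the connector events (disjoint underlying random variables once $v_1,v_2$ are fixed) is a point the paper leaves implicit.
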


\begin{proof}
For distinct vertices $v_1,v_2\in\Gamma_{\uparrow}$ the edge  $(v_1,v_2)$ is ascending with probability $\frac 1 {2^{m-2}}$.
For a triple $v_1,v_2,v_3$ of distinct vertices in $\Gamma$, where $v_1, v_2$ are ascending the probability that $v_3$ is also ascending and both edges $(v_1, v_3), (v_2, v_3)$ are ascending is
\[\frac1 {2^{2m-3}}.\]
For $v_1,v_2\in\Gamma_{\uparrow}$ the probability that there is no connecting $v_3$ as above equals
\[(1-\frac 1{2^{2m-3}})^{r-2}.\]
Thus \[
P_2\leq\sum_{v_1, v_2\in \Gamma}\frac 1 4 (1-\frac 1{2^{2m-3}})^{r-2} = {r\choose 2} \frac 1 4 (1-\frac 1{2^{2m-3}})^{r-2}.\qedhere
\]
\end{proof}

Consider orientations on the set of all walls $\mcl W$ of $\overline X_{(r,m)}$. We orient each wall randomly, assigning probability $\frac 12$ to each of two orientations for each wall $W\in \mcl W$.
For each $0$-cell $x\in \overline X_{(r,m)}$ the graph $\link(x)$ is complete on $r$ vertices. No self-osculations in $\overline X_{(r,m)}$ provide that walls adjacent to two distinct edges and/or vertices of $\link(x)$ are distinct. Thus every vertex of $\link(x)$ is ascending [descending] with probability $\frac 12$ and each edge of $\link(x)$ whose edges are ascending [descending] is ascending [descending] with probability $\frac{1}{2^{m-2}}$. We thus have the following:
\begin{cor}\label{cor:probability}
$\pr{\alink\text{ or }\dlink\text{ fails}}$ is exponentially decreasing. Specifically
\begin{align*}
\pr{\alink\text{ or }\dlink\text{ fails}}
&\leq\; \pr{\alink\text{ fails}} + \pr{\dlink\text{ fails}}\\
&\leq\; 2(P_1+P_2)\leq \frac 1{2^{r-1}}+{r\choose 2} \frac 1 2 (1-\frac 1{2^{2m-3}})^{r-2}.
\end{align*}
\end{cor}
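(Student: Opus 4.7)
The plan is to combine the two preceding lemmas via a union bound. Almost all of the substantive work has already been packaged in the setup paragraph immediately before the corollary: for each fixed $0$-cell $x\in\overline X_{(r,m)}$, the graph $\link(x)$ is complete on $r$ vertices, and by Proposition~\ref{prop:finite cover} no wall of $\overline X_{(r,m)}$ self-osculates, so walls adjacent at distinct vertices or edges of $\link(x)$ are distinct. The random orientation on $\mcl W$ therefore induces on $\link(x)$ exactly the probabilistic model used for the abstract graph $\Gamma$: each vertex is independently ascending with probability $\tfrac12$, and an edge whose endpoints are both ascending is itself ascending with conditional probability $\tfrac{1}{2^{m-2}}$. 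Hence $\alink$ and $\dlink$ may be analyzed as instances of $\Gamma_\uparrow$ and $\Gamma_\downarrow$.

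With this dictionary in hand, I would first record the union bound
\[
\pr{\alink\text{ or }\dlink\text{ fails}} \;\leq\; \pr{\alink\text{ fails}} + \pr{\dlink\text{ fails}},
\]
which is the first line of the displayed inequality. Next, the observation made just before the two lemmas gives $\pr{\Gamma_\uparrow\text{ fails}}\leq P_1+P_2$, and the symmetric statement for $\Gamma_\downarrow$ is formally identical since reversing every orientation swaps the two roles. Transferring back to $\overline X_{(r,m)}$ yields the intermediate bound $2(P_1+P_2)$. Substituting the explicit values of $P_1$ and $P_2$ from the two preceding lemmas and collecting the factor of $2$ then produces $\tfrac{1}{2^{r-1}}+\binom{r}{2}\tfrac12\bigl(1-\tfrac{1}{2^{2m-3}}\bigr)^{r-2}$, which is the final asserted bound.

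There is no real obstacle here; the serious content lies in the two preceding lemmas and in the no-self-osculation property supplied by Proposition~\ref{prop:finite cover}. The one step that should be flagged rather than glossed over is the independence claim: the translation from the wall-orientation model on $\overline X_{(r,m)}$ to the vertex/edge-ascending model on $\Gamma$ genuinely produces independent events, and this relies squarely on the fact that distinct vertices and edges of $\link(x)$ are dual to distinct walls, which is exactly what forbidding self-osculation buys us. Exponential decay in $r$ for fixed $m$ is then immediate, since the second summand is a polynomial $\binom{r}{2}$ times the geometric factor $\bigl(1-\tfrac{1}{2^{2m-3}}\bigr)^{r-2}$.
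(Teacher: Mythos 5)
Your proposal is correct and follows the same route as the paper: a union bound over the ascending and descending links, the reduction of each term to $P_1+P_2$ via the dictionary between $\link(x)$ and the abstract model on $\Gamma$ (justified by the no-self-osculation property from Proposition~\ref{prop:finite cover}), and then substitution of the bounds from the two preceding lemmas. Your explicit flagging of where independence comes from is exactly the content of the paragraph the paper places just before the corollary.
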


\subsection{Proof of Theorem~\ref{thm:uniform case}}
\begin{proof}
%
Proposition~\ref{prop:finite cover} provides a finite cover $\widehat X_{(r,m)}$ whose degree is bounded by a polynomial in $r$, and such that the  compression $\overline X=\overline X_{(r,m)}$
has the property that its walls are two-sided and have no self-osculations.

To apply Theorem~\ref{bestvina-brady thm} we need to find an orientation on $\mathcal W$ such that $\link_{\uparrow}(x)$ and $\link_{\downarrow}(x)$ are nonempty and connected for each $x\in\overline X^{0}$.  
%
We orient each $W\in\mcl W$ randomly assigning probability $\frac 12$ to each of two orientations of $W$. We need to prove that
\[
\pr{\link_{\uparrow}(x) \text{ or }\link_{\downarrow}(x) \text{ fails for some }x\in \overline X^{0}}<1.
\]
Since the left hand side is bounded above by
\begin{align*}
\sum_{x\in \overline X^{0}} \pr{\link_{\uparrow}(x) \text{ or }\link_{\downarrow}(x) \text{ fails}}
\end{align*}
it suffices to prove that for each $x\in \overline X^{0}$
\begin{align}\tag{$*$}\label{ineq}
\pr{\link_{\uparrow}(x) \text{ or }\link_{\downarrow}(x) \text{ fails}}<\frac 1 {|\overline X^{0}|}.
\end{align}
As $|\overline X^{0}|$ is bounded by a polynomial in $r$, but by Corollary~\ref{cor:probability} the probability on the left decreases exponentially in $r$, hence the inequality~\eqref{ineq} holds for all $r$ greater than some $R(m)$.

After finding an orientation on $\mathcal W$ such that $\alink$ and $\dlink$ are nonempty and connected, we consider the lawful subcomplex $Y\subset \overline X$ and the map $\overline X \xrightarrow{\phi} S^1$ induced by the orientation whose restriction to $Y$ lifts to a Morse function $\widetilde Y\to \reals$. By Theorem~\ref{bestvina-brady thm} the group $\ker(\pi_1Y\to \integers)$ is finitely generated. Consequently, its quotient $N=\ker(\pi_1\overline X\to \integers)$ is also finitely generated.
 To see that $\pi_1\overline X\to \integers$ is nontrivial, observe that $X^1$ has a positively directed closed path since $\overline X$ is compact and each $\alink$ is nonempty.
 \end{proof}

\section{Local quasiconvexity and Coxeter groups with nonpositive sectional curvature}
\subsection{Negative sectional curvature and local quasiconvexity}
\begin{defn}[Sectional curvature]
An \emph{angled $2$-complex} is a $2$-complex $Y$
with an \emph{angle} $\measuredangle (e)\in \mathbb R$
assigned to each edge $e$ of $\link(y)$ for each $y\in Y^{0}$. As edges in $\link(y)$ correspond to corners of $2$-cells at $y$, we regard the angles as assigned to corners of $2$-cells at $y$. 
The curvature at a $2$-cell $f$ of $Y$ is given by
\[\kappa(f) = 2\pi - \sum_{e\in \text{Corners}(f)} \deficiency(e)\]
where $\deficiency(e) = \pi - \measuredangle(e)$.
The \emph{curvature} of $Y$ at $y$ is given by
\begin{equation}\label{curvatureformula}
\kappa(y) = 2\pi - \pi\euler(\link(y))  +\sum_{e\in\text{Corners$(y)$ }}\measuredangle (e) = (2-v)\pi +\sum \deficiency(e).
\end{equation}

A \emph{section} of a combinatorial $2$-complex $Y$
at the $0$-cell $y$ is a combinatorial immersion $(S,s)\to (Y,y)$.
A section is \emph{regular} if $\link(s)$ is finite, connected, nonempty, with no valence $\leq 1$ vertex.
Pulling back the angles at a corner at  $y$ to corners at $s$, the \emph{curvature} of a section $(S,s)\to (Y,y)$ is defined to be $\kappa(s)$.
We say that $Y$ has \emph{sectional curvature} $\leq\alpha$ at $y$
if all regular sections of $Y$ at $y$ have curvature $\leq\alpha$.
Finally, $Y$ has \emph{sectional curvature} $\leq \alpha$ if each $2$-cell has curvature $\leq \alpha$ and $Y$ has sectional curvature $\leq \alpha$ at each $0$-cell.\end{defn}

\begin{defn}[Quasiconvexity]
Let $G$ be a group with a finite generating set $S$ and the Cayley graph $\Gamma(G,S)$. A subgroup $H$ of $G$ is \emph{quasiconvex}
if there is a constant $L\geq 0$ such that
every geodesic in $\Gamma(G,S)$
between two elements of $H$ lies in the $L$-neighborhood of $H$. When $G$ is hyperbolic, the quasiconvexity of $H$ is independent of the generating set of $G$ \cite{Short91}.
A group $G$ is \emph{locally quasiconvex}
if every finitely generated subgroup of $G$ is quasiconvex.
Every quasiconvex subgroup of a hyperbolic group
is finitely presented \cite{Short91}. Thus a locally quasiconvex
hyperbolic group is coherent.
\end{defn}

The main result about negative sectional curvature is as follows \cite{WiseSectional02, MartinezPedrozaWiseSectional}:

\begin{thm}\label{thm:negative sectional}
If $Y$ is a compact, piecewise Euclidean nonpositively curved $2$-complex whose associated angles have negative sectional curvature, then $\pi_1Y$ is locally quasiconvex.
\end{thm}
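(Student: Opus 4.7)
The plan is to derive local quasiconvexity via a combinatorial Gauss--Bonnet argument applied to minimal disk diagrams. Let $H\leq\pi_1Y$ be a finitely generated subgroup and let $\widehat Y\to Y$ be the cover with $\pi_1\widehat Y=H$. Finite generation of $H$ permits choosing a compact connected subcomplex $K\subseteq\widehat Y$ whose inclusion induces a surjection $\pi_1K\twoheadrightarrow H$. To establish quasiconvexity of $H$ in $\pi_1Y$ with respect to the generating set furnished by $Y^{1}$, it suffices to prove that every combinatorial geodesic in $\widehat Y$ whose endpoints lie in $K$ stays within a uniformly bounded neighborhood of $K$.

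Fix such a geodesic $\gamma$ of combinatorial length $\ell$, close it into a loop by concatenating with a shortest path in $K$ joining its endpoints, and fill the loop with a minimal-area reduced disk diagram $D\to\widehat Y$. After performing the standard reductions (cancelling dual pairs of faces and trimming spurs), the composition $D\to\widehat Y\to Y$ should restrict, at each interior $0$-cell $v$ of $D$, to a regular section of $Y$ at the image of $v$. The sectional curvature hypothesis then yields $\kappa(v)\leq\alpha<0$ at every interior $0$-cell, and likewise $\kappa(f)\leq\alpha<0$ at every interior $2$-cell.

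Next I would apply the combinatorial Gauss--Bonnet theorem to $D$:
\[
\sum_{f\subset D}\kappa(f)+\sum_{v\in D^{0}}\kappa(v)=2\pi\chi(D)=2\pi,
\]
where boundary vertices carry the usual corner-angle corrections. Along the $\gamma$-portion of $\partial D$ the local-geodesic hypothesis forces the interior angle at each interior boundary vertex to be at least $\pi$, so these vertices contribute nonpositively. Along the $K$-portion of $\partial D$ the total corner contribution is bounded by a constant depending only on $K$. Combined with the uniform negative bound on interior $\kappa$, this forces the number of interior cells of $D$ to be bounded by a constant depending only on $Y$ and $K$; tracing through the resulting isoperimetric inequality then bounds $\ell$, and hence the distance from $\gamma$ to $K$, as desired.

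The main technical obstacle is guaranteeing that a minimal reduced disk diagram really does induce regular sections at every interior $0$-cell: one must rule out valence-$\leq 1$ vertices in the induced link at each interior $0$-cell, handle situations where two distinct faces of $D$ incident to $v$ map to a common face of $Y$, and verify that all corner angles along the geodesic side of $\partial D$ make nonpositive contributions. These are precisely the points where the \emph{sectional} (rather than pointwise) flavor of the curvature hypothesis is essential, since the relevant regular sections at interior vertices of $D$ need not coincide with links of vertices of $Y$.
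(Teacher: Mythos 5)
First, note that the paper does not prove Theorem~\ref{thm:negative sectional} at all: it is quoted from \cite{WiseSectional02, MartinezPedrozaWiseSectional}, so there is no internal proof to compare against. Judging your proposal on its own terms, it has a fundamental conceptual gap rather than a fixable technical one. The only sections your argument ever invokes are those arising at interior $0$-cells of a reduced disk diagram, and the link of an interior $0$-cell of a disk diagram is a circle; the induced section is therefore always a \emph{planar} section (an immersed cycle in $\link(y)$). Consequently your argument, if it worked, would use only the hypothesis of negative \emph{planar} sectional curvature. But that weaker hypothesis does not imply local quasiconvexity: for instance, suitable ascending HNN extensions of free groups admit compact $2$-complexes that are negatively curved in the planar sense and word-hyperbolic, yet the finitely generated fiber subgroup is exponentially distorted and hence not quasiconvex. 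The whole point of the sectional curvature hypothesis --- and the reason the definition quantifies over all regular sections, not just circles --- is to control the non-planar immersed subgraphs of links that arise in the cover $\widehat Y$ corresponding to a finitely generated subgroup $H$. A disk-diagram argument never sees these. The actual proof in \cite{WiseSectional02} instead works directly inside $\widehat Y$: one starts with a compact subcomplex carrying $H$ and enlarges it to a compact core by an attachment process, applying combinatorial Gauss--Bonnet to the compact subcomplexes themselves (whose vertex links are arbitrary regular subgraphs of links of $Y$); negative sectional curvature is what forces this process to terminate and yields the core, from which quasiconvexity is then deduced.

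There are also secondary problems. Your claim that $\kappa(f)\leq\alpha<0$ for interior $2$-cells contradicts the piecewise Euclidean hypothesis, under which every $2$-cell has $\kappa(f)=0$; all negativity must come from vertices. Your claim that boundary vertices along the combinatorial geodesic contribute nonpositively is not automatic: a geodesic can turn a corner of a single $2$-cell at a boundary vertex whose total angle is less than $\pi$, producing a positive contribution, and there can be linearly many such vertices in $|\gamma|$. Finally, even granting everything, Gauss--Bonnet only bounds the \emph{number of interior vertices} of $D$, not its area or thickness: diagrams with no interior vertices (ladders and fans) can be arbitrarily long, and controlling them is precisely the content of the separate fans-and-ladders machinery. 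None of these repairs, however, addresses the core issue that the argument never engages the non-planar sections for which the hypothesis is actually needed.
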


The following is known about locally quasiconvex Coxeter groups:
\begin{prop}\label{prop:locally quasiconvex}For each $r\geq 3$ there exists $N(r)$
such that for all $m>N(r)$ the group $G_{(r,m)}$
is locally quasiconvex.\end{prop}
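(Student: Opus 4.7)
The strategy is to apply Theorem~\ref{thm:negative sectional} to a finite torsion-free cover of the compressed standard complex. By virtual torsion-freeness of Coxeter groups, pick a finite-index torsion-free subgroup $G'$ of $G_{(r,m)}$, let $\widehat X\to X_{(r,m)}$ be the corresponding cover, and let $Y=\overline X$ denote its compression. Each $2$-cell of $Y$ is a $2m$-gon; I metrize each as a regular Euclidean $2m$-gon, so every corner angle equals $\pi-\pi/m$.

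Gromov's link condition is verified as follows. The link of every $0$-cell of $Y$ is (a subgraph of) the complete graph $K_r$ on the $r$ Coxeter generators: collapsing the $a_i^2$-bigons identifies $a_i$ with $a_i^{-1}$ so each vertex has only $r$ incident edges, and collapsing the parallel $2m$-gons in each dihedral orbit leaves each pair $\{a_i,a_j\}$ contributing a single corner per vertex. Every link edge has length $\pi-\pi/m$, and since the girth of $K_r$ is $3$, every essential closed path has length at least $3(\pi-\pi/m)\geq 2\pi$ provided $m\geq 3$. Hence $Y$ is piecewise Euclidean nonpositively curved.

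For the sectional curvature condition I take an angled structure with corner angle $\alpha=\pi-\pi/m-\delta$ for a small $\delta>0$; each $2m$-gon then has curvature $-2m\delta<0$. A regular section is a finite connected graph $L$ of minimum valence $\geq 2$ immersing locally injectively into $K_r$. Simplicity of $K_r$ forces $L$ to be simple, and each vertex of $L$ has valence at most $r-1$. With $p=|V(L)|$ and $q=|E(L)|$, combinatorial Gauss-Bonnet gives
\[
\kappa(L) \;=\; (2-p)\pi + q(\pi-\alpha) \;=\; \pi\bigl(2-p+q/m\bigr)+q\delta.
\]
Using $q\leq\binom{p}{2}$ when $p\leq r$, $q\leq p(r-1)/2$ when $p>r$, and $p\geq 3$, the main term $\pi(2-p+q/m)$ attains its supremum at $p=r$, $q=\binom{r}{2}$, where it equals $\pi\bigl(2-r+r(r-1)/(2m)\bigr)$; this is strictly negative precisely for $m>r(r-1)/(2(r-2))$. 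Taking $\delta$ sufficiently small absorbs the perturbation, so $\kappa(L)<0$ for all regular sections.

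Setting $N(r):=\lceil r(r-1)/(2(r-2))\rceil$, Theorem~\ref{thm:negative sectional} yields that $G'=\pi_1Y$ is locally quasiconvex. Since local quasiconvexity passes to finite-index overgroups, $G_{(r,m)}$ is locally quasiconvex. The main delicate steps I anticipate are: confirming that compression reduces the vertex link in $Y$ to a subgraph of $K_r$ (rather than something with multi-edges or loops); showing that locally injective maps $L\to K_r$ inherit simplicity and the valence bound $\leq r-1$; and verifying that the optimization over $(p,q)$ is genuinely attained at the $K_r$-link, so that the cited threshold on $m$ indeed guarantees negative sectional curvature.
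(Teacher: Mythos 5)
Your route is the same one the paper takes: metrize the compressed torsion-free cover $\overline X$ by regular Euclidean $2m$-gons, check the link condition, establish negative sectional curvature for $m>\frac{r(r-1)}{2(r-2)}$, and invoke Theorem~\ref{thm:negative sectional}. The only substantive difference is that the paper simply cites Theorem~13.3 of Wise's sectional curvature paper for that threshold, whereas you attempt to derive it; so the derivation is the one step you actually had to supply, and it contains an error.

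The problem is the assertion that the main term $2-p+q/m$ ``attains its supremum at $p=r$, $q=\binom{r}{2}$.'' No argument is given, and the claim is false in general. For $3\le p\le r$ the binding constraint is $q\le\binom{p}{2}$, so you must maximize $g(p)=2-p+\frac{p(p-1)}{2m}$, which is a \emph{convex} quadratic in $p$; its maximum over $[3,r]$ therefore sits at an endpoint, and one checks that it sits at $p=3$ (value $-1+\frac{3}{m}$) rather than at $p=r$ whenever $m\ge\frac{r+2}{2}$ --- which covers most integer exponents above your threshold (e.g.\ $r=5$, $m=4$ gives $g(3)=-\frac14$ but $g(5)=-\frac12$). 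In particular your parenthetical ``strictly negative precisely for $m>\frac{r(r-1)}{2(r-2)}$'' does not follow from what you wrote. The conclusion is salvageable with little extra work: by convexity you need only verify negativity at the two endpoints, namely $g(r)<0$, which is exactly the threshold condition, and $g(3)=-1+\frac{3}{m}<0$, which holds because $\frac{r(r-1)}{2(r-2)}\ge 3$ for every $r\ge3$ (equivalently $(r-3)(r-4)\ge0$), so $m$ above the threshold forces $m\ge4$; the regime $p>r$ is then handled by your valence bound since $1-\frac{r-1}{2m}>0$. Finally, the remark that ``taking $\delta$ sufficiently small absorbs the perturbation'' needs a word of justification, since $q$ is unbounded over all regular sections: the point is that $q\le\frac{p(r-1)}{2}$, so both the main term and the error term $q\delta$ are linear in $p$ with slopes $-\bigl(1-\frac{r-1}{2m}\bigr)\pi$ and at most $\frac{(r-1)\delta}{2}$ respectively, and a single choice of small $\delta$ works uniformly.
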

We briefly review two ways of proving Proposition~\ref{prop:locally quasiconvex}. One method to prove Proposition \ref{prop:locally quasiconvex} is from \cite{McCammondWiseCoherence} or \cite[Thm IV]{Schupp03} and shows that a Coxeter group $G_{(r,m)}$ is locally quasiconvex whenever $m\geq \frac 3 2 r$.
\begin{com}check
\end{com}
We shall focus on reviewing conditions ensuring negative sectional curvature so that Theorem~\ref{thm:negative sectional} provides Proposition~\ref{prop:locally quasiconvex}.

As in Section~\ref{sec:euler}, let $X$ be the standard $2$-complex of the presentation of $G=G_{(r,m)}$ and let $\overline X$ be the compression of a finite cover of $X$ corresponding to a finite index torsion-free subgroup of $G$. If each 3-generator Coxeter subgroup of $G$ is infinite (i.e. $\frac{1}{m_{ij}}+\frac{1}{m_{jk}} +\frac{1}{m_{ki}} \leq 1$) then there is a natural metric of nonpositive curvature on $\overline X$ induced by metrizing each $2$-cell as a regular Euclidean polygon. The previous condition is equivalent to the nonpositive curvature of all sections $(S,s)\to (\overline X,x)$ where $S$ is a disc. Thus we say that $G$ has \emph{nonpositive planar sectional curvature}, when all 3-generator Coxeter subgroups are infinite.
Finally, if all exponents satisfy $m_{ij}>\frac{r(r-1)}{2(r-2)}$ then $\overline X$ has negative sectional curvature \cite[Thm~13.3]{WiseSectional02}.

\subsection{Nonpositive sectional curvature}
Let $X$ denote the standard $2$-complex of the presentation of Coxeter group $G$ and let $\overline X$ denote the compression of a cover of $X$ corresponding to a finite index torsion-free subgroup.
There is a surprisingly elegant characterization of nonpositive sectional curvature of $\overline X$ in terms of the Euler characteristic of Coxeter subgroups of $G$.

\begin{thm} 
The following are equivalent:
\begin{enumerate}
\item $\overline X$ has nonpositive sectional curvature
\item $\euler(H)\leq 0$ for each nontrivial Coxeter subgroup $H\subset G$ whose associated graph $\Upsilon_H$ is connected but not a tree.
\end{enumerate}
\end{thm}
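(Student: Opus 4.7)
The plan rests on an explicit formula for the curvature of a regular section. If $(S,s)\to(\overline X, x)$ is a regular section with $\link(s)=\Lambda$ and immersion $\phi\colon\Lambda\to\Upsilon_G=\link(x)$, then metrizing each $2m_{ij}$-gon of $\overline X$ as a regular Euclidean polygon assigns corner-deficiency $\pi/m_{ij}$, and so
\[
\kappa(s) \;=\; (2-|V(\Lambda)|)\pi+\sum_{e\in E(\Lambda)}\tfrac{\pi}{m_{\phi(e)}} \;=\; 2\pi\,\widetilde{\euler}(\Lambda),
\]
where $\widetilde{\euler}(\Lambda):= 1-\tfrac{|V(\Lambda)|}{2}+\sum_{e}\tfrac{1}{2m_{\phi(e)}}$. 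When $\Lambda$ embeds as $\Upsilon_H$ for some Coxeter subgroup, this recovers $\euler(H)$ via the formula in Section~\ref{sec:euler}.

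For $(1)\Rightarrow(2)$ I argue by contrapositive. Given $H$ with $\Upsilon_H$ connected but not a tree, iteratively prune leaves of $\Upsilon_H$ to obtain the leafless core $\Upsilon_H^{\mathrm{core}}\subseteq \Upsilon_G$; this is nonempty (since $\Upsilon_H$ contains a cycle), connected, and leafless. Pruning a leaf $a$ adjacent only to $a'$ increases $\euler$ by $\tfrac12-\tfrac{1}{2m_{a,a'}}\geq \tfrac14>0$, so $\euler(H)\leq \euler(H^{\mathrm{core}})$, where $H^{\mathrm{core}}$ is the Coxeter subgroup on $V(\Upsilon_H^{\mathrm{core}})$. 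Then $\Upsilon_H^{\mathrm{core}}=\Upsilon_{H^{\mathrm{core}}}$ arises as the link of a regular embedded section at $x$, and hypothesis (1) forces $\kappa(s)=2\pi\,\euler(H^{\mathrm{core}})\leq 0$, whence $\euler(H)\leq 0$.

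For $(2)\Rightarrow(1)$ the strategy is to reduce every immersed regular section to an embedded one via Stallings-type folding. Whenever $v_1\neq v_2$ in $\Lambda$ satisfy $\phi(v_1)=\phi(v_2)$, identify them and cascade the forced edge-identifications so that the immersion property is restored. Merging two vertices of valence $\geq 2$ leaves a vertex of valence $\geq 2$ even after collapsing parallel edges, so leaflessness and connectedness persist throughout the folding. After finitely many folds one arrives at the embedded image $\overline\Lambda\subseteq \Upsilon_G$. Let $H$ be the Coxeter subgroup on $V(\overline\Lambda)$; the containment $\overline\Lambda\subseteq\Upsilon_H$ of subgraphs on the same vertex set gives $\widetilde{\euler}(\overline\Lambda)\leq \euler(H)$, and $\Upsilon_H$ inherits connectedness and a cycle from $\overline\Lambda$, so hypothesis (2) yields $\euler(H)\leq 0$.

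The main technical obstacle is the remaining inequality $\widetilde{\euler}(\Lambda)\leq \widetilde{\euler}(\overline\Lambda)$. Writing $n_u:=|\phi^{-1}(u)|$ and $d_e:=|\phi^{-1}(e)|$, direct computation gives
\[
\widetilde{\euler}(\overline\Lambda)-\widetilde{\euler}(\Lambda) \;=\; \tfrac{1}{2}\left[\,\textstyle\sum_u (n_u-1)\,-\,\sum_e (d_e-1)/m_e\,\right],
\]
and the plan is to show the bracket is nonnegative by combining three ingredients: the leaflessness inequality $\sum_{e\ni u}d_e\geq 2n_u$ at each preimage-cluster, the immersion bound $d_e\leq \min(n_{u_1},n_{u_2})$, and the hypothesis-driven global bound $\sum_{e\in E(\Upsilon_H)}1/m_e\leq r_H-2$ coming from $\euler(H)\leq 0$. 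Chaining $\widetilde{\euler}(\Lambda)\leq \widetilde{\euler}(\overline\Lambda)\leq \euler(H)\leq 0$ then gives $\kappa(s)\leq 0$ for every regular section at $x$, and since every $2$-cell has curvature $0$ (it is a regular Euclidean $2m_{ij}$-gon), this completes $(2)\Rightarrow(1)$.
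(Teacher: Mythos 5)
Your direction $(1)\Rightarrow(2)$ is correct and essentially identical to the paper's: prune valence-one vertices (each pruning only increases $\euler$), realize the leafless core as an embedded regular section at a $0$-cell whose link is $\Upsilon_G$, and read off $\kappa=2\pi\euler$. The interesting divergence is in $(2)\Rightarrow(1)$. The paper disposes of that direction by noting that among sections with a fixed embedded vertex set the full subgraph maximizes $\kappa$ (adding an edge adds the positive quantity $\deficiency(e)$), and full subgraphs are exactly the $\Upsilon_H$; it does not engage with regular sections whose link merely \emph{immerses} in $\link(x)$. You do engage with them, folding onto the image $\overline\Lambda$ and aiming for the chain $\widetilde{\euler}(\Lambda)\leq\widetilde{\euler}(\overline\Lambda)\leq\euler(H)\leq 0$. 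That is a more faithful reading of the definition of a section, but it is exactly where your argument is incomplete.

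The gap is the inequality $\widetilde{\euler}(\Lambda)\leq\widetilde{\euler}(\overline\Lambda)$, which you correctly flag as ``the main technical obstacle'' but then only outline a plan for. As stated, it does not follow from the local ingredients you list (leaflessness, $\sum_{e\ni u}d_e\geq 2n_u$, and $d_e\leq\min(n_{u_1},n_{u_2})$): a connected double cover $\Lambda$ of $K_6$ with all $m_e=2$ has $n_u=d_e=2$ everywhere and satisfies all three constraints, yet $\widetilde{\euler}(\Lambda)=\tfrac{5}{2}>\tfrac{7}{4}=\widetilde{\euler}(K_6)$. So the global hypothesis $\euler(H)\leq0$ must enter the estimate in an essential way, and your sketch does not say how: the bound $\sum_{e}1/m_e\leq r_H-2$ is a single scalar inequality, while the bracket $\sum_u(n_u-1)-\sum_e(d_e-1)/m_e$ depends on the full multiplicity data $(n_u,d_e)$, and no chain of inequalities connecting them is exhibited. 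Until that step is supplied (or rerouted, e.g.\ by bounding $\widetilde{\euler}(\Lambda)$ directly rather than through $\widetilde{\euler}(\overline\Lambda)$, or by citing a reduction-to-embedded-sections lemma from the sectional curvature literature), direction $(2)\Rightarrow(1)$ is not proven. The remaining pieces --- the formula $\kappa(s)=2\pi\widetilde{\euler}(\Lambda)$, the check that $\overline\Lambda$ is connected and contains a cycle so that hypothesis $(2)$ applies to $H$, the comparison $\widetilde{\euler}(\overline\Lambda)\leq\euler(H)$, and the observation that each $2$-cell has curvature $0$ --- are all correct.
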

\begin{proof}$(1)\Rightarrow (2)$: Suppose $\euler(H)> 0$ and $\Upsilon_H$ is connected and not a tree. 
We can assume that $\Upsilon_H$ has no valence $1$ vertex, since the Coxeter subgroup $H'$ associated to the subgraph $\Upsilon _{H'}$ of $\Upsilon_H$ obtained by removing a valence $1$ vertex satisfies $\euler(H')\geq \euler(H)$ by Equation~\eqref{eulerformula}.
A section at a $0$-cell of $\overline X$ whose vertices correspond to the generators of $H$ has curvature $2\pi\euler(H)$ by comparing Equations~\eqref{eulerformula} and \eqref{curvatureformula}.

$(2)\Rightarrow (1)$: 
Let $x$ be a $0$-cell of $\overline X$. It suffices to consider sections corresponding to the full subgraphs of $\link(x)$. Indeed $\deficiency(e)> 0$ for each edge $e$ since each angle is $< \pi$ and thus adding edges increases $\kappa$ by the second part of Equation~\eqref{curvatureformula}. 
Any regular section corresponding to a full subgraph is isomorphic to the associated graph $\Upsilon_H$ of a Coxeter subgroup $H$ and the curvature of the section equals $2\pi\euler(H)$. Thus if the section has positive curvature then $\euler(H)>0$.
\end{proof}

\begin{prob}\label{prob:positive incoherent}
Let $G$ have a nonpositive planar sectional curvature with $\euler(G)>0$ and $\Upsilon_G$ connected and not a tree.
Is it true that $\pi_1G$ is incoherent?
\end{prob}

We hope that the methods used here can be applied to an
appropriate finite index subgroup.
An affirmative answer to Problem~\ref{prob:positive incoherent} would be a step in proving the following:\
\begin{conj}
If $G$ has nonpositive planar sectional curvature then
the following are equivalent:
\begin{enumerate}
\item $G$ is coherent
\item$\overline X$ has nonpositive sectional curvature.
\end{enumerate}
\end{conj}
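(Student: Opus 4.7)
The plan is to split the equivalence into its two implications, each leveraging the Euler-characteristic criterion established in the preceding theorem.

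For the direction $(1)\Rightarrow (2)$, I would argue by contrapositive. Suppose $\overline X$ has positive sectional curvature somewhere. The preceding theorem then produces a nontrivial Coxeter subgroup $H\subset G$ with $\Upsilon_H$ connected and not a tree such that $\euler(H)>0$. The hypothesis that $G$ has nonpositive planar sectional curvature passes to $H$ (each $3$-generator Coxeter subgroup of $H$ is already infinite in $G$), so the compressed complex associated to $H$ is $2$-dimensional and aspherical and $H$ is infinite. Thus $H$ satisfies the hypotheses of Conjecture~\ref{conj:positive incoherent} and would be incoherent; any finitely generated subgroup of $H$ witnessing this is also a finitely generated subgroup of $G$, so $G$ is incoherent. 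The obstacle here is that Conjecture~\ref{conj:positive incoherent} is itself open: progress on Problem~\ref{prob:positive incoherent}, or an extension of the probabilistic Morse-theoretic argument of Section~3 from uniform-exponent groups to arbitrary dimension-$\leq 2$ Coxeter groups with $\euler>0$, would close this half.

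For the direction $(2)\Rightarrow (1)$, the task is to deduce coherence of $G$ from nonpositive sectional curvature of $\overline X$. My plan would be to extend Theorem~\ref{thm:negative sectional} in the Coxeter setting. By the Euler-characteristic criterion, the sections where the curvature vanishes correspond to Coxeter subgroups $H$ with $\euler(H)=0$ and $\Upsilon_H$ connected and not a tree; such $H$ should be virtually abelian of rank $2$, or at any rate have a well-understood flat structure. I would therefore treat the corresponding parabolic subgroups as peripheral and attempt to develop a relative version of the sectional-curvature/local-quasiconvexity machinery of \cite{WiseSectional02, MartinezPedrozaWiseSectional}. Coherence would then follow once one shows that every finitely generated subgroup of $G$ is either quasiconvex off the flat locus or contained up to finite index in a peripheral flat parabolic.

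The direction $(2)\Rightarrow (1)$ will clearly be the hard one. The first direction is essentially a repackaging of the preceding theorem together with an existing open conjecture, whereas the second requires genuinely new positive coherence results in a regime where the existing negative-sectional-curvature machinery does not yet apply. I expect the construction of a relative quasiconvexity theory, compatible with the flat parabolic pieces that create the zero-curvature sections, to be the main technical obstacle.
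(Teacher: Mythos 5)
You should first be aware that the paper offers no proof of this statement: it is posed as a conjecture and left open, so there is no argument of the authors' to compare yours against. Your proposal is accordingly a research program rather than a proof, and it does not close either implication. For $(1)\Rightarrow(2)$ your reduction is sound as a conditional argument: arguing contrapositively, the preceding theorem produces a Coxeter subgroup $H$ with $\Upsilon_H$ connected and not a tree and $\euler(H)>0$; nonpositive planar sectional curvature is inherited by Coxeter subgroups, so $H$ has dimension $\leq 2$ and is infinite (it contains an infinite $3$-generator Coxeter subgroup, since $\Upsilon_H$ contains a cycle), and Conjecture~\ref{conj:positive incoherent} would then make $H$, hence $G$, incoherent. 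But Conjecture~\ref{conj:positive incoherent} is itself open --- the paper establishes incoherence only for uniform-exponent groups of large rank and, via Ramsey's theorem, for bounded-exponent groups of large rank --- so this half remains exactly as conjectural as what it is meant to prove.

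The direction $(2)\Rightarrow(1)$ is where the genuine gap lies, and your sketch does not supply the missing idea. Theorem~\ref{thm:negative sectional} requires strictly \emph{negative} sectional curvature; under mere nonpositivity the zero-curvature sections correspond to Coxeter subgroups $H$ with $\euler(H)=0$, and no relative version of the sectional-curvature/local-quasiconvexity machinery handling such flat sections exists in the paper or its references --- you are proposing to invent it, not invoking it. Moreover, the route from local quasiconvexity to coherence in the paper passes through hyperbolicity of the ambient group, which can fail in the nonpositively curved regime (e.g.\ $G_{(3,3)}$ is virtually $\integers^2$), so even the correct statement of the relative theory you would need, and what should replace ``quasiconvex'' for subgroups meeting the flat parabolics, is unclear. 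In short: your decomposition of the equivalence and your identification of the obstacles are accurate and candid, but both halves remain unproved, consistent with the statement's status in the paper as a conjecture.
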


\bibliographystyle{alpha}
\bibliography{wise}

\def\cprime{$'$} \def\polhk#1{\setbox0=\hbox{#1}{\ooalign{\hidewidth
  \lower1.5ex\hbox{`}\hidewidth\crcr\unhbox0}}} \def\cprime{$'$}
  \def\cprime{$'$} \def\polhk#1{\setbox0=\hbox{#1}{\ooalign{\hidewidth
  \lower1.5ex\hbox{`}\hidewidth\crcr\unhbox0}}}
\begin{thebibliography}{MPW13}

\bibitem[BB97]{BestvinaBrady97}
Mladen Bestvina and Noel Brady.
\newblock Morse theory and finiteness properties of groups.
\newblock {\em Invent. Math.}, 129(3):445--470, 1997.

\bibitem[Bie81]{BieriBook81}
Robert Bieri.
\newblock {\em Homological dimension of discrete groups}.
\newblock Queen Mary College Department of Pure Mathematics, London, second
  edition, 1981.

\bibitem[Dav08]{DavisCoxeterBook2008}
Michael~W. Davis.
\newblock {\em The geometry and topology of {C}oxeter groups}, volume~32 of
  {\em London Mathematical Society Monographs Series}.
\newblock Princeton University Press, Princeton, NJ, 2008.

\bibitem[GRS80]{GrahamRothschildSpencer80}
Ronald~L. Graham, Bruce~L. Rothschild, and Joel~H. Spencer.
\newblock {\em Ramsey theory}.
\newblock John Wiley \& Sons, Inc., New York, 1980.
\newblock Wiley-Interscience Series in Discrete Mathematics, A
  Wiley-Interscience Publication.

\bibitem[HW10]{HaglundWiseCoxeter}
Fr{\'e}d{\'e}ric Haglund and Daniel~T. Wise.
\newblock Coxeter groups are virtually special.
\newblock {\em Adv. Math.}, 224(5):1890--1903, 2010.

\bibitem[MPW13]{MartinezPedrozaWiseSectional}
Eduardo Mart{\'{\i}}nez-Pedroza and Daniel~T. Wise.
\newblock Coherence and negative sectional curvature in complexes of groups.
\newblock {\em Michigan Math. J.}, 62(3):507--536, 2013.

\bibitem[MW05]{McCammondWiseCoherence}
J.~P. McCammond and D.~T. Wise.
\newblock Coherence, local quasiconvexity, and the perimeter of 2-complexes.
\newblock {\em Geom. Funct. Anal.}, 15(4):859--927, 2005.

\bibitem[Sch03]{Schupp03}
Paul~E. Schupp.
\newblock Coxeter groups, 2-completion, perimeter reduction and subgroup
  separability.
\newblock {\em Geom. Dedicata}, 96:179--198, 2003.

\bibitem[Sho91]{Short91}
Hamish Short.
\newblock Quasiconvexity and a theorem of {H}owson's.
\newblock In {\'E}.~Ghys, A.~Haefliger, and A.~Verjovsky, editors, {\em Group
  theory from a geometrical viewpoint (Trieste, 1990)}, pages 168--176. World
  Sci. Publishing, River Edge, NJ, 1991.

\bibitem[Wis04]{WiseSectional02}
D.~T. Wise.
\newblock Sectional curvature, compact cores, and local quasiconvexity.
\newblock {\em Geom. Funct. Anal.}, 14(2):433--468, 2004.

\bibitem[Wis11]{WiseRandomMorse}
Daniel~T. Wise.
\newblock Morse theory, random subgraphs, and incoherent groups.
\newblock {\em Bull. Lond. Math. Soc.}, 43(5):840--848, 2011.

\end{thebibliography}

%
%
\end{document}